\newcounter{algsubstate}
\renewcommand{\thealgsubstate}{\alph{algsubstate}}
\newenvironment{algsubstates}
  {\setcounter{algsubstate}{0}%
   \renewcommand{\State}{%
     \stepcounter{algsubstate}%
     \Statex {\footnotesize\thealgsubstate:}\space}}
  {}
  \renewcommand{\ALG@name}{Procedure}
\newtheorem{observation}{Observation}
\newtheorem{proposition}{Proposition}
\newtheorem{definition}{Definition}
\newtheorem{theorem}{Theorem}
\newtheorem{lemma}{Lemma}
\newtheorem{openproblem}{Problem}
\newtheorem{notation}{Notation}
\newtheorem{corollary}{Corollary}
\newtheorem{claim}{Claim}
\definecolor{maroon}{rgb}{0.5, 0.0, 0.0}
\definecolor{darkblue}{rgb}{0.0, 0.0, 0.55}
\newcommand{\crefdefpart}[2]{\hyperref[#2]{\namecref{#1}~\labelcref*{#1}~\ref*{#2}}}
\newcommand*\patchAmsMathEnvironmentForLineno[1]{%
 \expandafter\let\csname old#1\expandafter\endcsname\csname #1\endcsname
 \expandafter\let\csname oldend#1\expandafter\endcsname\csname end#1\endcsname
 \renewenvironment{#1}%
    {\linenomath\csname old#1\endcsname}%
    {\csname oldend#1\endcsname\endlinenomath}}%
\newcommand*\patchBothAmsMathEnvironmentsForLineno[1]{%
 \patchAmsMathEnvironmentForLineno{#1}%
 \patchAmsMathEnvironmentForLineno{#1*}}%
\definecolor{brightmaroon}{rgb}{0.76, 0.13, 0.28}
\definecolor{linkblue}{rgb}{0, 0.337, 0.227}
\newrobustcmd{\onesub}{\mathord{\includegraphics{figs/one-sub}}}
\newrobustcmd{\leftup}{\mathord{\includegraphics{figs/left-up}}}
\newcommand{\xMapsto}[2][]{\ext@arrow 0599{\Mapstofill@}{#1}{#2}}
\def\Mapstofill@{\arrowfill@{\Mapstochar\Relbar}\Relbar\Rightarrow}
\newenvironment{txteq}
{
	\begin{equation}
	\begin{minipage}[t]{0.85\textwidth} 
	\em                                
}
{\end{minipage}\end{equation}\ignorespacesafterend}
\newenvironment{txteq*}
{
	\begin{equation*}
	\begin{minipage}[t]{0.85\textwidth} 
	\em                                
}
{\end{minipage}\end{equation*}\ignorespacesafterend}
\title{Sparse graphs with local covering conditions on edges}
\author{
Debsoumya Chakraborti\thanks{
Mathematics Institute, University of Warwick, Coventry, United Kingdom. 
Supported by the European Research Council (ERC) under the European Union Horizon 2020 research and innovation programme (grant agreement No. 947978). 
Email: {\tt debsoumya.chakraborti@warwick.ac.uk}. 
}
\;\;\; Amirali Madani\thanks{School of Computer Science, Carleton University, Ottawa, Ontario, Canada. Supported by the Natural Sciences and Engineering Research Council of Canada (NSERC). Emails: {\tt amiralimadani@cmail.carleton.ca} and {\tt anil@scs.carleton.ca}}
\;\;\; Anil Maheshwari\footnotemark[2] \;\;\; Babak Miraftab\thanks{School of Computer Science, Carleton University, Ottawa, Ontario, Canada. Email: {\tt bobby.miraftab@gmail.com}.
}
}
\def \G {\mathcal{G}_{tree}}
\date{}
\begin{document}

\maketitle
\begin{abstract}
In 1988, Erd\H{o}s suggested the question of minimizing the number of edges in a connected $n$-vertex graph where every edge is contained in a triangle. 
Shortly after, Catlin, Grossman, Hobbs, and Lai resolved this in a stronger form. 
In this paper, we study a natural generalization of the question of Erd\H{o}s in which we replace `triangle' with `clique of order $k$' for ${k\ge 3}$. 
We completely resolve this generalized question with the characterization of all extremal graphs. 
Motivated by applications in data science, we also study another generalization of the question of Erd\H{o}s where every edge is required to be in at least $\ell$ triangles for $\ell\ge 2$ instead of only one triangle. 
We completely resolve this problem for $\ell = 2$. 
\end{abstract}

\section{Introduction}
In 1988, Erd\H{o}s~\cite{erdos} posed the following question: What is the minimum number of edges in a connected $n$-vertex graph such that every edge is part of a triangle? 
Note that without the connectedness assumption, the question is trivial to answer because the empty graph satisfies the hypothesis. 
A few years later, Catlin, Grossman, Hobbs, and Lai~\cite{CATLIN1992285} considered a more general question and showed that every $n$-vertex graph with $c$ connected components where every edge belongs to a triangle has at least $\frac{3}{2}(n-c)$ edges.

A natural extension of the question of Erd\H{o}s is to determine the minimum number of edges in a connected $n$-vertex graph where each edge belongs to a copy of $K_k$ for a given $k\ge 3$. In this paper, we completely resolve this question. We also study the problem of minimizing the number of edges in a connected $n$-vertex graph where every edge is contained in at least $\ell$ triangles. For brevity, we say a graph $G$ has a \defin{$(k,\ell)$-cover} if every edge of $G$ lies in at least $\ell$ copies of $K_k$. Before diving into our main results, we briefly mention a few related works.

\subsection{Relevant literature}
\label{secliter}
An extremal problem similar to this paper was considered in \cite{chakraborti2020extremal}, where the authors studied an edge minimization problem of graphs with a different condition. Instead of requiring every edge to be a part of a copy of $K_k$, they imposed the condition that every vertex is contained in a copy of a $K_k$. 
\begin{proposition}[Proposition~1.1 in \cite{chakraborti2020extremal}] \label{prop:previous result}
Let $k\ge 2$ and $G$ be an $n$-vertex graph such that every vertex is in a copy of $K_k$ and $n-k = qk + r$ where $q\ge 0$ and $1\le r\le k$. 
Then, the number of edges in $G$ is at least $(q+2)\binom{k}{2} - \binom{k-r}{2}$. Moreover, equality is achieved if and only if $G$ is a graph consisting of the union of 2 copies of $K_k$ sharing $k-r$ vertices, together with the disjoint union of $q$ copies of $K_k$. 
\end{proposition}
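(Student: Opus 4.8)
The plan is to bound the edges from below by covering $V(G)$ greedily with copies of $K_k$, charging each new edge to a newly covered vertex, then to reduce the bound to a one‑variable convexity computation, and finally to read off the extremal structure from the equality case. Concretely, I would build a sequence of cliques $Q_1,\dots,Q_t$, each a copy of $K_k$, so that every $Q_i$ contains a vertex outside $Q_1\cup\cdots\cup Q_{i-1}$; since every vertex lies in some $K_k$, this greedy process can continue until the $Q_i$ cover $V(G)$. Writing $N_i=Q_i\setminus(Q_1\cup\cdots\cup Q_{i-1})$ and $j_i=|N_i|$, I get $\sum_i j_i=n$, $1\le j_i\le k$, and $j_1=k$. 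The edges of $Q_i$ meeting $N_i$ number exactly $\binom{k}{2}-\binom{k-j_i}{2}$, and these edge sets are disjoint over $i$ (an edge charged at step $i$ has both endpoints covered after step $i$, hence meets no later $N_{i'}$). Setting $h(j):=\binom{k}{2}-\binom{k-j}{2}$, this yields
\[
|E(G)|\ \ge\ \sum_{i=1}^{t}\Big(\binom{k}{2}-\binom{k-j_i}{2}\Big)=\sum_{i=1}^{t}h(j_i).
\]

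Next I would minimize $\sum_i h(j_i)$ over $j_i\in\{1,\dots,k\}$ with $\sum_i j_i=n=(q+1)k+r$. The key analytic fact is that the increments $h(x+1)-h(x)=k-x-1$ are strictly decreasing, so $h$ is strictly concave with $h(0)=0$. This drives an exchange argument: given two non‑full parts $j_a\le j_b<k$, transferring one unit from $j_a$ to $j_b$ keeps the sum fixed and changes the objective by $(k-j_b-1)-(k-j_a)=j_a-j_b-1<0$, a strict decrease. Iterating forces at most one non‑full part, which (since the rest equal $k$ and the total is $(q+1)k+r$ with $1\le r\le k$) must be $q+1$ parts equal to $k$ and one part equal to $r$, giving the claimed value $(q+2)\binom{k}{2}-\binom{k-r}{2}$; the strictness shows this minimizer is unique as a multiset.

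The heart of the argument is the equality characterization. Equality forces both that $\{j_i\}$ is the canonical multiset $\{k^{(q+1)},r\}$ and that \emph{every} edge of $G$ is charged exactly once. Assigning to each vertex $v$ its birth step $b(v)$ (the index $i$ with $v\in N_i$), the charging scheme shows every edge $uv$ must satisfy $uv\in E(Q_{\max(b(u),b(v))})$. Now any full clique ($j_i=k$) has $N_i=Q_i$ and is therefore vertex‑disjoint from $Q_1\cup\cdots\cup Q_{i-1}$; hence all full cliques are pairwise disjoint, and only the unique non‑full clique $Q^\ast$ (with $|N^\ast|=r$) can meet earlier ones. Its $k-r$ old vertices must all lie in a single earlier clique: if two of them lay in distinct disjoint cliques $Q_a,Q_b$, the edge between them could belong to no $Q_{\max(a,b)}$ and would be uncharged. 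Consequently $G$ is exactly $q$ disjoint copies of $K_k$ together with two copies of $K_k$ sharing precisely $k-r$ vertices, with no further edges — the asserted extremal graph.

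The main obstacle I anticipate is precisely this rigidity in the equality case: ruling out cliques overlapping in complicated patterns while still meeting the edge count. The device that unlocks it is the bookkeeping that charges each edge at the later of its endpoints' birth steps, since it converts the global count into the purely local constraint $uv\in E(Q_{\max(b(u),b(v))})$ and thereby makes any edge between two disjoint cliques impossible. Getting this accounting and the uniqueness of the optimization minimizer to align is where the care will be needed.
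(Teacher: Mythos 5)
Your proposal is correct: the greedy clique cover with the disjoint charge of $\binom{k}{2}-\binom{k-j_i}{2}$ per step, the concavity/exchange minimization, and the birth-step argument forcing the old vertices of the unique non-full clique into a single earlier clique all go through, including the boundary case $r=k$ where all cliques are disjoint. Note that the paper only cites \Cref{prop:previous result} from \cite{chakraborti2020extremal} without reproving it, and your argument is essentially the mirror image of the paper's own proof of \Cref{thm:1}: Procedure~\ref{procedureshrink} peels cliques off rather than accumulating them, \Cref{corollaryconvex} phrases your exchange step as maximizing the convex sum $\sum_j\binom{x'_j+1}{2}$, and the Claim at the end of \Cref{lem:alternative extremal structure satisfied} (producing an uncharged edge $uv$ when the exceptional intersection spreads over several cliques) is exactly your ``old vertices lie in a single earlier clique'' step, so this is the same approach rather than a genuinely different route.
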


Another motivation for this paper comes from recent studies of \emph{cohesive} subgraphs in the data science literature. Loosely speaking, for a given graph, a cohesive subgraph is one in which the vertices are \emph{densely} connected to each other~\cite{corekeywordcommunitysearch,fomin2023building,chitnis}. Since graph-based representations are common in massive data analysis, cohesive subgraphs are useful in many different areas of big data. Over the years, finding and maintaining such subgraphs have found applications in various fields such as community search~\cite{corekeywordcommunitysearch,corekeywordcommunitysearch2,trusskeywordcommunitysearch, trusskeywordcommunitysearch2, trusskeywordcommunitysearch3} (e.g., finding researchers with similar interests in collaboration networks), E-commerce~\cite{ecommerce} (e.g., finding and matching similar items to similar users), and Biology~\cite{biology}. Cohesive subgraphs can be defined using different cohesion measures. Among these variants, $\ell$-truss~\cite{cohen,trusskeywordcommunitysearch,trusskeywordcommunitysearch2, trusskeywordcommunitysearch3,trussextra,trussextra2} is one of the most commonly used ones. 
An \defin{$\ell$-truss} of a graph $G$ is a maximal connected subgraph of $G$ in which every edge is contained in at least $\ell$ triangles. 
This motivates the study of graphs with  $(3,\ell)$-covers. Indeed, this was already studied by Burkhardt, Faber, and Harris~\cite{BurkhardtFH22,BurkhardtFH20} who showed that the minimum number of edges in a connected $n$-vertex graph with a $(3,\ell)$-cover is $n\left(1+\frac{\ell} {2}\right)+O\left(\ell^2\right)$.

\subsection{Main results}
\label{mainresults}
We start with some basic notations and definitions.
\begin{notation}
For a natural number~$m$, we write $[m]$ to denote the set $\{1,\dots,m\}$.

Consider a (hyper)graph $G$. We will denote by $V(G)$ and $E(G)$ the vertex set and the edge set of $G$, respectively. 
We will sometimes write $G=(V,E)$, where $V=V(G)$ and $E=E(G)$. 
For $X\subseteq V(G)$, we denote by $G[X]$ the subgraph of $G$ induced by $X$. 
For standard graph-theoretic notations and definitions, we refer the readers to~\cite{bondy1982graph}. 

A hypergraph is called \defin{linear} if every pair of distinct hyperedges intersect in at most one vertex. A connected hypergraph is called a \defin{hypertree} if it contains no cycles. Here, for a given hypergraph, a \defin{cycle} of length $\ell\geq 2$ is a subhypergraph with $\ell$ hyperedges that can be labelled as $e_1,\dots,e_{\ell}$ such that there exist distinct vertices $v_1,\ldots,v_{\ell}$ with $v_i\in e_i\cap e_{i+1}$ for every $i\in [\ell]$ (identifying $e_{\ell +1}$ with $e_1$).
\end{notation}
For the convenience of classifying extremal graphs of our results, we present the following definition. 
This definition can be seen as a special case of the standard notion of tree-decomposition (see, e.g., Section~12.3 in \cite{diestel}). 
\begin{definition}
    For a family of graphs $F_1,\dots,F_m$, define $\G(F_1,\dots,F_m)$ to be the family of graphs $G$ such that 
    there is a linear hypertree with the vertex set $V(G)$ and $m$ hyperedges $E_1,\dots,E_m\subseteq V(G)$, one corresponding to every graph $F_i$ satisfying the following. 
        \begin{enumerate}
        \item For every edge $uv\in E(G)$, there exists $i\in [m]$ such that $u,v\in E_i$, and  
		\item for every~$i\in [m]$, the graph $G[E_i]$ induced by $E_i$ is isomorphic to $F_i$.
	\end{enumerate}
    \label{def_tree}
\end{definition}
Note that the number of vertices in any graph in $\G(F_1,\dots,F_m)$ is ${1 + \sum_{i=1}^m (|E_i|-1)}$, and the number of edges in such graphs is $\sum_{i=1}^m |E(F_i)|$. 
Next, we give an example of a graph in $\G(F_1,\dots,F_m)$, to help digest the above definition. By identifying a special vertex in each $F_i$ and then gluing every $F_i$ together on that special vertex, we obtain a graph in $\G(F_1,\dots,F_m)$, where the underlying linear hypertree is a star. \Cref{exampletightcliquefigure} demonstrates an example of such a graph in the family $\G(K_4,K_4,L)$, where $L$ is the graph obtained from the union of 2 copies of $K_4$ sharing $2$ vertices. In such situations, we use the shorthand notation $\G(2K_4,L)$ to denote the same family. 
\begin{figure}[H]
    \centering
        {\includegraphics[scale=0.6]{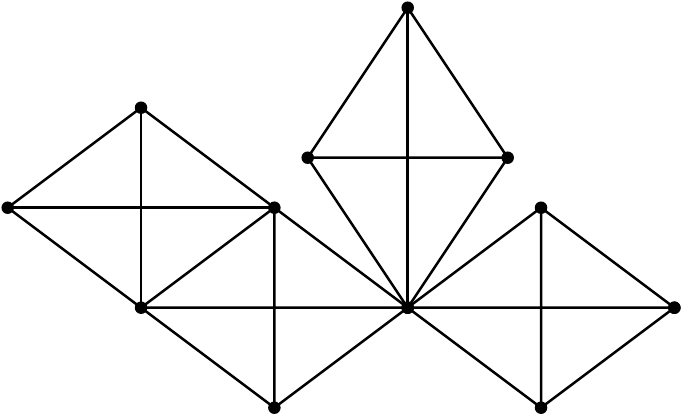}}
    \caption{A graph in the family $\G(2K_4,L)$. This connected graph has $12$ vertices, a \mbox{$(4,1)$-cover}, and $23$ edges which is the minimum possible.} 
    \label{exampletightcliquefigure}
\end{figure}

We are now ready to state our main result on the minimum number of edges in a connected $n$-vertex graph with a $(k,1)$-cover. 
Note that there is no connected $n$-vertex graph with a $(k,1)$-cover when $2\le n \le k-1$ and there is a unique connected $k$-vertex graph $K_k$ that has a $(k,1)$-cover. Thus, we assume the number of vertices to be more than $k$ in our first main result below.

\begin{theorem} \label{thm:1}
Let $k\ge 3$ and $G$ be a connected $n$-vertex graph with a \mbox{$(k,1)$-cover} such that $n-k = q (k-1) +r$ where $q\ge 0$ and $1\le r\le k-1$. 
Then, the number of edges in $G$ is at least $(q+2)\binom{k}{2} - \binom{k-r}{2}$. Moreover, equality is achieved if and only if $G\in \G(qK_k,L)$, where the graph $L$ is the union of 2 copies of $K_k$ sharing $k-r$ vertices. 
\end{theorem}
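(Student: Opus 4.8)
The plan is to prove the lower bound by a direct clique-peeling estimate and then to extract the extremal structure from the resulting chain of equalities. Throughout set $K:=k-1$ and, for $0\le x\le K$, write $\rho(x):=\tfrac{x(K-x)}{2}$; a short computation rewrites the target value as $(q+2)\binom{k}{2}-\binom{k-r}{2}=\tfrac{k(n-1)}{2}+\rho(r)$. Since every edge of $G$ lies in a copy of $K_k$, every vertex lies in a $K_k$ and hence has degree at least $k-1$; writing $\mathrm{exc}:=\sum_{v}(\deg v-(k-1))$ we have $|E(G)|=\tfrac12\big[(k-1)n+\mathrm{exc}\big]$, so it suffices to show $\mathrm{exc}\ge (n-k)+2\rho(r)$.

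First I would fix a cover of $E(G)$ by copies of $K_k$ (one for each edge) and order these cliques as $C_1,\dots,C_t$ so that each $C_i$ with $i\ge 2$ meets $V(G_{i-1})$, where $G_{i-1}:=C_1\cup\cdots\cup C_{i-1}$; this is possible because $G$ is connected, so the intersection graph of the cover is connected and any search order works. Put $a_i:=|V(C_i)\setminus V(G_{i-1})|\in\{0,1,\dots,k-1\}$, so that $\sum_{i\ge2}a_i=n-k$. When $C_i$ is added, at most $\binom{k-a_i}{2}$ of its edges are already present, so it contributes at least $\binom{k}{2}-\binom{k-a_i}{2}$ new edges; translating into the excess, adding $C_i$ raises $\mathrm{exc}$ by at least $a_i(k-a_i)=a_i+2\rho(a_i)$. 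Summing gives $\mathrm{exc}\ge (n-k)+2\sum_{i\ge 2}\rho(a_i)$.

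The remaining analytic input is the subadditivity of $\rho$ modulo $K$: for $0\le x,y\le K$ one checks that $\rho(x)+\rho(y)-\rho(\langle x+y\rangle)$ equals $xy$ (no carry) or $(K-x)(K-y)$ (with carry), where $\langle\,\cdot\,\rangle$ denotes reduction into $\{0,\dots,K-1\}$; in either case it is nonnegative. Applying this repeatedly to $\sum_{i\ge 2} a_i=q K+r$ yields $\sum_{i\ge2}\rho(a_i)\ge\rho(r)$, whence $\mathrm{exc}\ge (n-k)+2\rho(r)$ and $|E(G)|\ge \tfrac{k(n-1)}{2}+\rho(r)=(q+2)\binom{k}{2}-\binom{k-r}{2}$, as required.

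For the characterization I would run every inequality above as an equality. Tightness in the edge count forces, at each step, that the $a_i$ new vertices of $C_i$ see no old vertex outside $V(C_i)\cap V(G_{i-1})$ and that this intersection already induces a complete graph, i.e.\ the cliques glue only along faces that are themselves cliques; tightness in the subadditivity step forces (by the strictness of $xy>0$ and $(K-x)(K-y)>0$ for $1\le x,y\le K-1$) that the multiset $\{a_i\}$ consists of $q$ values equal to $k-1$ together with a single value equal to $r$. The $q$ cliques attached with $a_i=k-1$ meet the rest in exactly one vertex, while the one clique with $a_i=r$ meets it in $k-r$ vertices, producing a single copy of $L$; assembling these gluings shows that the clique-intersection pattern is a linear hypertree on $q$ copies of $K_k$ and one copy of $L$, that is, $G\in\G(qK_k,L)$, and conversely every such graph attains the bound. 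The main obstacle is precisely this last step: converting the numerical equalities into the rigid combinatorial statement that the blocks are cliques (and one $L$) glued in a tree along single vertices, which requires ruling out alternative tight configurations (for example overlaps that create a cycle in the intersection pattern, or larger cliques such as $K_{k+1}$) and verifying that the induced decomposition is genuinely a linear hypertree.
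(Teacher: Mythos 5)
Your lower bound is correct, and despite the different packaging it is essentially the paper's argument (Procedure~\ref{procedureshrink} plus Lemma~\ref{corollaryconvex}) in disguise: your connected ordering $C_1,\dots,C_t$ of an edge-covering family of $K_k$'s is their peeling procedure with $a_i=k-x_j$, your per-step count $\binom{k}{2}-\binom{k-a_i}{2}$ is their $|E_{j-1}|-|E_j|\ge\binom{k}{2}-\binom{x_j}{2}$, and your mod-$(k-1)$ subadditivity of $\rho$ (I checked the deficit identity: $xy$ without carry, $(K-x)(K-y)$ with carry) is an equivalent repackaging of their convexity lemma --- since the total deficit $\sum_i\rho(a_i)-\rho(r)$ depends only on the multiset, two values of $a_i$ in $\{1,\dots,K-1\}$ already force strict inequality, which is exactly their ``all but at most one $x'_j\in\{0,k-1\}$'' conclusion. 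The excess/degree bookkeeping is a harmless detour, and the algebraic identities ($a_i(k-a_i)=a_i+2\rho(a_i)$, and $(q+2)\binom{k}{2}-\binom{k-r}{2}=\tfrac{k(n-1)}{2}+\rho(r)$) check out. One local slip: equality does \emph{not} force the multiset $\{a_i\}$ to consist of $q$ values $k-1$ and one value $r$ as you state, because values $a_i=0$ are compatible with equality in the $\rho$-chain; equality only forces such cliques to contribute no new edges, so they must be explicitly discarded before any structural analysis (otherwise ``each non-exceptional clique meets the earlier union in exactly one vertex'' is false for them).

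The genuine gap is the one you flag yourself, and it is not a formality --- it is the main content of the paper's proof of the moreover part (Lemma~\ref{lem:alternative extremal structure satisfied}). Equality gives you that the exceptional clique $C_{j'}$ (with $a_{j'}=r$) attaches along a set $S$ of $k-r$ old vertices all of whose pairs are already edges of $\cup_{i<j'}E(C_i)$; but to obtain a single copy of $L$ you must show $S\subseteq V(C_i)$ for a \emph{single} earlier index $i$, and ``$S$ spans a clique'' does not give this: a priori the $\binom{k-r}{2}$ pairs of $S$ could be covered by several different earlier cliques. The paper closes this with a short but essential claim: take $j^*$ minimal with $S\subseteq\cup_{i\le j^*}V(C_i)$, pick $u\in S\cap V(C_{j^*})$ outside $\cup_{i<j^*}V(C_i)$ and $v\in S\setminus V(C_{j^*})$; since every non-exceptional clique meets the union of its predecessors in exactly one vertex, no $C_i$ with $j^*<i<j'$ contains both $u$ and $v$, so $uv\notin\cup_{i<j'}E(C_i)$, a contradiction. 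Note that this uses the already-derived values $a_j=1$ for all $j\ne j'$, i.e.\ the rigidity is global rather than per-step, which is precisely why your per-step tightness observations cannot simply be ``assembled'' into the linear hypertree; once this claim is in place, converting the intersection pattern into membership in $\G(qK_k,L)$ (the paper's Observation~\ref{obs:extremal}) is routine, and your worries about $K_{k+1}$'s are moot since the cover consists of $K_k$'s by construction. Supply the single-earlier-clique claim (or an equivalent) and your outline becomes a complete proof; without it, the ``only if'' direction of the characterization is unproven.
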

To see that the graphs in $\G(qK_k,L)$ achieve equality, note that the number of edges in $L$ is $2\binom{k}{2} - \binom{k-r}{2}$. For an illustration,  see \Cref{exampletightcliquefigure}.  
We note that the extremal graphs in \Cref{thm:1} are made by taking the connected components of the extremal graph in \Cref{prop:previous result} and then gluing them to form a tree-like structure. 

Analogous to the generalization of Catlin, Grossman, Hobbs, and Lai~\cite{CATLIN1992285} of Erd\H{o}s' question as mentioned before, we generalize \Cref{thm:1} to the situation where $G$ has multiple connected components. Before stating the result, we mention a small example. For $c\ge 1$ and $k\ge 3$, there is a unique $(k+c-1)$-vertex graph with $c$ connected components such that each component has a $(k,1)$-cover, namely the graph formed by taking the disjoint union of $c-1$ copies of $K_1$ and a copy of $K_k$ (which is simply a clique of order $k$ along with $c-1$ isolated vertices). 
\begin{corollary}
Let $c\ge 1$ and $k\ge 3$ and $G$ be an $n$-vertex graph with connected components $G_1,\dots, G_c$ such that every component has a \mbox{$(k,1)$-cover} with $n-k-c+1 = q (k-1) +r$, where $q\ge 0$ and $1\le r\le k-1$. 
Then, the number of edges in $G$ is at least $(q+2)\binom{k}{2} - \binom{k-r}{2}$. Moreover, the equality is achieved if and only if $c-1$ many components $G_i\in \{K_1\}\cup (\cup_{j\in \mathbb{N}} \G(jK_k))$ and the remaining component $G_i\in \cup_{j\in \mathbb{N}} \G(jK_k,L)$, where the graph $L$ is the union of 2 copies of $K_k$ sharing $k-r$ vertices.
\end{corollary}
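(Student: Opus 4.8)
The plan is to deduce the bound from \Cref{thm:1} by a \emph{merging} operation, and to read off the extremal configurations from the block structure of the resulting connected graph. First observe that, since every edge must lie in a copy of $K_k$, each component is either a single vertex $K_1$ or has at least $k$ vertices. I would then build a connected auxiliary graph $G'$ by fixing a spanning tree on the set of components and, for each of its $c-1$ tree-edges, identifying a chosen vertex of one endpoint-component with a chosen vertex of the other, so that the $c$ components are glued into a single connected graph. Because the identified vertices always come from vertex-disjoint parts, no identification creates a multi-edge or loop and none deletes an edge, so $|E(G')|=|E(G)|$; moreover every edge of $G'$ still lies inside the copy of $K_k$ that contained it in its original component, so $G'$ has a $(k,1)$-cover. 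Each identification lowers the vertex count by one, hence $|V(G')|=n-c+1$. Since $n-k-c+1=q(k-1)+r\ge 1$, we have $|V(G')|>k$, so \Cref{thm:1} applies to $G'$ and yields $|E(G)|=|E(G')|\ge (q+2)\binom{k}{2}-\binom{k-r}{2}$, the desired bound.

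For achievability (the ``if'' direction) I would simply count. A member of $\G(jK_k)$ has $1+j(k-1)$ vertices and $j\binom{k}{2}$ edges (a $K_1$ being the case $j=0$), while a member of $\G(j_0K_k,L)$ has $k+j_0(k-1)+r$ vertices and $(j_0+2)\binom{k}{2}-\binom{k-r}{2}$ edges, using $|E(L)|=2\binom{k}{2}-\binom{k-r}{2}$. Summing over the prescribed $c-1$ clique-forest components, whose clique-counts sum to some $J$, together with the one distinguished component, the total number of vertices collapses to $(c-1)+k+(J+j_0)(k-1)+r=k+c-1+q(k-1)+r=n$ on setting $q=J+j_0$, while the total number of edges is exactly $(J+j_0+2)\binom{k}{2}-\binom{k-r}{2}=(q+2)\binom{k}{2}-\binom{k-r}{2}$. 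Thus every configuration of the stated form attains the bound.

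The substance is the ``only if'' direction. If equality holds then $|E(G')|=(q+2)\binom{k}{2}-\binom{k-r}{2}$, so by the equality clause of \Cref{thm:1} the merged graph satisfies $G'\in\G(qK_k,L)$; in particular $G'$ is a linear hypertree whose blocks are $q$ copies of $K_k$ together with the copy of $L$. The $c-1$ identification vertices are cut vertices of $G'$, and recovering $G$ amounts to splitting $G'$ at exactly these cut vertices. Since a $2$-connected block is never broken by splitting a single vertex, each component of $G$ is the union of a connected sub-collection of the blocks of $G'$, that is, a subtree of its block--cut tree, and is therefore a tree of cliques lying in $\{K_1\}\cup\bigcup_j\G(jK_k)$, with the sole exception of the component that inherits the $L$-block, which lies in $\bigcup_j\G(j_0K_k,L)$. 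This reproduces the claimed classification.

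I expect the delicate point to be precisely this block analysis, and in particular the boundary case $r=k-1$. There $L$ shares only $k-r=1$ vertex, so $L=\G(2K_k)$ is \emph{not} $2$-connected: its two cliques are separate blocks that a split may send to different components. In that regime $G'$ is a pure tree of $q+2$ cliques with no distinguished $L$-block, and splitting can produce, for instance, $c$ disjoint copies of $K_k$, in which no single component contains two cliques. Handling this case requires using the identity $\bigcup_j\G(jK_k,L)=\bigcup_{j\ge 2}\G(jK_k)$ when $r=k-1$ and verifying that \emph{every} distribution of the $q+2$ cliques and stray vertices among connected pieces is extremal; checking that these configurations exactly exhaust the equality cases, with no other graph slipping in, is the crux of the argument.
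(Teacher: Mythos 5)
Your route is genuinely different from the paper's: the authors sketch a componentwise argument, applying \Cref{thm:1} to each $G_i$ separately and recombining the per-component bounds via the convexity inequality (\Cref{corollaryconvex}), whereas you glue the $c$ components at single vertices along a spanning tree into one connected graph $G'$ with $n-c+1$ vertices, the same edge set, and a $(k,1)$-cover, and then invoke \Cref{thm:1} once. This is a clean trick: since $(n-c+1)-k = q(k-1)+r$ with $r\ge 1$, you have $|V(G')|>k$, so \Cref{thm:1} applies with exactly the parameters $q,r$ of the corollary, and the lower bound follows with no convexity at all; your achievability count is also correct. For $r<k-1$ your block-splitting analysis of the equality case also goes through: $L$ shares $k-r\ge 2$ vertices between its two cliques, hence is $2$-connected, forms a single block of $G'$, and survives the splitting inside one component, which then lies in $\cup_j\G(jK_k,L)$, while the remaining components are clique-trees or copies of $K_1$.

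However, the point you flag at $r=k-1$ is not merely a verification you postponed --- it is a genuine obstruction to proving the statement as literally written, and completing your argument honestly shows why. When $r=k-1$ one has $\binom{k-r}{2}=0$ and $\cup_j\G(jK_k,L)=\cup_{j\ge 2}\G(jK_k)$, and the splitting can indeed distribute the $q+2$ clique-blocks of $G'$ arbitrarily among the components; \emph{every} such distribution attains the bound. Concretely, take $k=3$, $c=2$, and $G$ the disjoint union of two triangles: then $n=6$, $q=0$, $r=2=k-1$, the bound is $2\binom{3}{2}-\binom{1}{2}=6$, and $G$ has exactly $6$ edges, yet neither component lies in $\cup_j\G(jK_3,L)$, since every member of that family has at least $5$ vertices. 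So the ``only if'' direction of the stated classification fails at $r=k-1$, and no argument --- yours, or the paper's convexity route, in which the exceptional term of \Cref{corollaryconvex} likewise vanishes because $r'=k-r-1=0$ --- can recover it; the correct classification in that case is that every component lies in $\{K_1\}\cup(\cup_j\G(jK_k))$ with $q+2$ cliques in total. Your proposal is therefore complete and correct for the bound, for the ``if'' direction, and for the ``only if'' direction when $r<k-1$, but the remaining case requires amending the statement rather than finishing the proof.
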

We omit the proof of this since it is straightforward to apply \Cref{thm:1} to each connected component and apply convexity inequalities (e.g., \Cref{corollaryconvex}) to deduce it. 

The bound in \Cref{thm:1} also gives an upper bound on the minimum number of edges in a connected $n$-vertex graph with a \mbox{$(3,k-2)$-cover} because of the following observation. This improves the upper bound of the corresponding result of Burkhardt, Faber, and Harris in~\cite{BurkhardtFH20}.
\begin{observation}\label{obs}
    Every graph with a \mbox{$(k,1)$-cover} also has a \mbox{$(3,k-2)$-cover} for $k\ge 3$.
\end{observation}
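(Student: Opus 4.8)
The plan is to fix an arbitrary edge and read off the required triangles directly from the clique guaranteed by the hypothesis; no induction or global argument is needed, since the statement is essentially a local counting observation.

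First I would take an arbitrary edge $uv \in E(G)$. Because $G$ has a $(k,1)$-cover, there exists a vertex set $S \subseteq V(G)$ with $|S| = k$ and $u,v \in S$ such that $G[S]$ is isomorphic to $K_k$. Let $w_1,\dots,w_{k-2}$ denote the remaining $k-2$ vertices of $S$. Since $G[S]$ is complete, each $w_i$ is adjacent to both $u$ and $v$, so every set $\{u,v,w_i\}$ induces a triangle of $G$ containing the edge $uv$.

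Next I would observe that these $k-2$ triangles are pairwise distinct: they share the edge $uv$ but have distinct third vertices $w_1,\dots,w_{k-2}$, which are distinct precisely because they are distinct elements of the $k$-set $S$. Hence the edge $uv$ lies in at least $k-2$ copies of $K_3$. As $uv$ was an arbitrary edge of $G$, every edge of $G$ lies in at least $k-2$ triangles, i.e.\ $G$ has a \mbox{$(3,k-2)$-cover}.

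There is no real obstacle here: the only point worth stating explicitly is the distinctness of the exhibited triangles, which is immediate from the fact that $S$ has exactly $k$ vertices. The content of the observation is simply that a single $K_k$ through an edge already forces $k-2$ triangles through that edge, and the converse inclusion between cover conditions is what lets \Cref{thm:1} transfer as an upper bound to the $(3,k-2)$-cover setting.
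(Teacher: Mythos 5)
Your proof is correct and is exactly the argument the paper intends: the observation is stated without proof precisely because the $k-2$ distinct triangles $\{u,v,w_i\}$ inside the guaranteed copy of $K_k$ through $uv$ are immediate. Nothing is missing, and the distinctness point you make explicit is the only detail worth noting.
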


It is natural to ask if the aforementioned upper bound obtained from using \Cref{thm:1,obs} is tight for graphs with $(3,\ell)$-covers for $\ell \geq 2$.
\begin{openproblem}\label{op4}
For $\ell \geq 2$, is the minimum number of edges for a connected $n$-vertex graph with a $(3,\ell)$-cover the same as the minimum number of edges in a connected $n$-vertex graph with a $(\ell+2,1)$-cover?
\end{openproblem}
Unfortunately, this question turns out to be false in general.
To see that, let $\ell= 2\ell'\ge 6$. 
By \Cref{thm:1}, the minimum number of edges for any connected $(2\ell'+4)$-vertex graph with a $(2\ell'+2,1)$-cover is $2\binom{2\ell'+2}{2} - \binom{2\ell'}{2}$. However, there exists a connected $(2\ell' + 4)$-vertex graph with a $(3,2\ell')$-cover and $4\binom{\ell'+2}{2} < 2\binom{2\ell'+2}{2} - \binom{2\ell'}{2}$ edges. Indeed, this is achieved by the complete $(\ell'+2)$-partite graph with $2$ vertices in each partition, i.e., the graph obtained from the clique $K_{2\ell'+4}$ after removing a perfect matching (see \cref{rmrk3fig1}).

\begin{figure}[H]
    \centering
        {\includegraphics[scale=0.70]{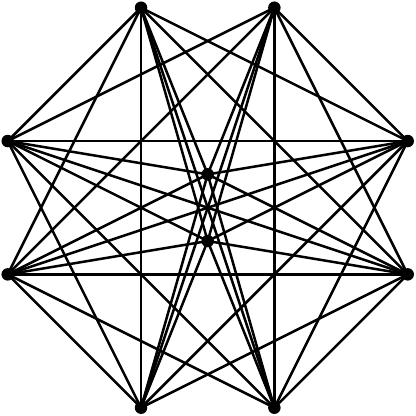}}
    \caption{An example of a connected $10$-vertex graph with a $(3,6)$-cover with $40$ edges}
    \label{rmrk3fig1}
\end{figure}
However, we show that for $\ell=2$, the answer to \Cref{op4} turns out to be affirmative. This follows from the following result along with \Cref{obs}. 
\begin{theorem} \label{thm:2}
If $G$ is a connected $n$-vertex graph with a \mbox{$(3,2)$-cover} with the minimum possible number of edges, then $G$ has a \mbox{$(4,1)$-cover}. 
\end{theorem}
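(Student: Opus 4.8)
The plan is to argue by contradiction: assume $G$ is a connected $n$-vertex $(3,2)$-cover graph with the minimum number of edges, yet some edge $uv$ lies in no copy of $K_4$. The first step is to record the local structure forced by this assumption. Since $uv$ lies in at least two triangles, the set $W = N(u)\cap N(v)$ of common neighbours has size $t\ge 2$; and since $uv$ is in no $K_4$, no two vertices of $W$ are adjacent, so $W$ is an independent set. I would also establish two general facts about minimum $(3,2)$-cover graphs that are used repeatedly. First, $G$ is $2$-edge-connected, since every edge lies in a triangle and hence no edge is a bridge. Second, $\delta(G)\ge 3$, together with the sharper statement that every vertex of degree exactly $3$ lies in a copy of $K_4$: a short argument shows that the two remaining neighbours of such a vertex $v$ are each forced, by the two-triangle requirement on the three edges at $v$, to be adjacent to each other and to the third neighbour, producing a $K_4$.

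The heart of the proof is a local exchange contradicting minimality. Fix $w_1,w_2\in W$ and add the edge $w_1w_2$; this creates a copy of $K_4$ on $\{u,v,w_1,w_2\}$ and, crucially, endows each of the four edges $uw_1, uw_2, vw_1, vw_2$ with a new triangle, so that each of them now lies in at least three triangles. Because $G$ was minimum, no edge of $G$ could be deleted without breaking the $(3,2)$-cover or disconnecting the graph; combined with $2$-edge-connectivity this means that for every edge $e$ there is a \emph{tight} edge $f$ (one lying in exactly two triangles) sharing a triangle with $e$. The goal is to show that the surplus created around the gadget $\{u,v,w_1,w_2\}$ frees at least two edges that can now be deleted while preserving both the $(3,2)$-cover and connectivity, yielding a graph on the same vertex set with strictly fewer edges and contradicting minimality. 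Concretely, the external triangles that previously provided the second triangle for $uw_1, uw_2, vw_1, vw_2$ become redundant for those edges, and one traces which supporting edges can be removed without dropping any other edge below two triangles.

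The main obstacle is precisely this bookkeeping: an edge may be deleted only if \emph{every} triangle through it has its remaining two edges in surplus, and in a minimum graph the supporting edges are heavily shared, so a careless deletion can drop some third edge below two triangles. I expect the argument to split into cases according to the size of $W$ and the overlap pattern among the external second triangles of the edges $uw_i$ and $vw_i$ — for instance, whether the second triangles of $uw_1$ and $vw_1$ coincide, and whether distinct vertices of $W$ share neighbours outside $\{u,v\}$ — with the pair $w_1,w_2$ chosen so as to maximise the freed surplus, and the min-degree and $2$-edge-connectivity facts invoked to keep the graph connected after deletion. Once every edge is shown to lie in a $K_4$, the graph $G$ has a $(4,1)$-cover by definition, which is the assertion; together with \Cref{obs} (the case $k=4$) and the edge count of \Cref{thm:1} this also shows the two minima coincide, answering \Cref{op4} affirmatively for $\ell=2$.
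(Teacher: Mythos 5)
Your preliminary observations are all correct and cheap to verify (the independence of $W=N(u)\cap N(v)$, bridgelessness, $\delta(G)\ge 3$, degree-$3$ vertices lying in a $K_4$, and the existence of a tight edge in a triangle through any deletable edge), but none of them is the theorem. The entire content of the statement is concentrated in the step you defer: after adding $w_1w_2$, you must exhibit \emph{two} edges whose deletion preserves both connectivity and the $(3,2)$-cover, and you offer no argument that such edges exist --- only the expectation that a case analysis over the overlap patterns of the external second triangles of $uw_1,uw_2,vw_1,vw_2$ will produce them. This is a genuine gap, not a routine verification: the external triangles supporting these four edges are built from edges that may themselves be tight and may simultaneously support the triangle counts of many other edges, so there is no a priori surplus to harvest; nothing in your setup rules out a configuration in which every candidate edge is the unique second triangle of some edge outside the gadget, leaving zero deletable edges and only a net gain of $+1$ from the added edge $w_1w_2$. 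A local exchange argument would have to control these sharing patterns globally, and you give no mechanism (no counting, no extremal choice of $w_1,w_2$ that provably works) for doing so.

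It is instructive to compare with the paper's route, which deliberately avoids local surgery. The paper proves the stronger \Cref{thm:2 stronger} by induction on $n$: if some edge $e$ lies in no $K_4$, then \Cref{lemtwotriangle} shows the contraction $G.e$ is connected, retains a $(3,2)$-cover, and loses at least $3$ edges; the arithmetic of $F(n)$ (from \Cref{thm:1} together with \Cref{obs}) then forces every inequality in the chain $F(n-1)\le |E(G.e)|\le |E|-3\le F(n)-3\le F(n-1)$ to be tight, so $G.e$ is itself extremal and --- by the \emph{moreover} part of \Cref{thm:1} --- lies in $\G((q+1)K_4)$. It is exactly this global structural characterization (each edge of $G.e$ in a unique $K_4$, distinct $K_4$'s meeting in at most one vertex) that makes the final local analysis around the preimage of $e$ succeed in producing a $K_4$ through $e$, a contradiction. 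In other words, the paper buys the contradiction with the full extremal classification of $(4,1)$-cover graphs, which your purely local approach has no access to. Unless you can supply the missing deletion argument --- and the heavy sharing of tight edges in near-extremal graphs suggests this is at least as hard as the theorem itself --- the proposal does not constitute a proof.
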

Determining the exact minimum number of edges for a connected $n$-vertex graph with a $(3,\ell)$-cover remains open for $\ell\ge 3$. See \Cref{sec:concluding remarks} for a brief discussion on a couple of related open problems.

\section{Proof of \Cref{thm:1}}\label{sectionkone}

In this section, we prove a tight lower bound on the number of edges in any connected $n$-vertex graph with a $(k,1)$-cover for $k\geq 3$. 
We will use the following standard convexity inequality in our proof.

\begin{lemma}
    Suppose $m$, $r'$, $q'$, and $I$ are nonnegative integers with $r' < m$ and $q' < I$. Over all choices of values $x_j \in\{0,1,2, \dots, m\}$ for each $j \in [I]$ such that $\sum_{j=1}^ I x_j=q' m+r'$, the expression $\sum_{j=1}^{I}\binom{x_j +1}{2}$ achieves its maximum if and only if except at most one value of $j$, we have $x_j\in \{0, m\}$ (thus, $x_j=m$ for $q'$ many values of $j$). 
    \label{corollaryconvex}
\end{lemma}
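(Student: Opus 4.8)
The plan is to treat this as a standard discrete convexity (smoothing) argument applied to the strictly convex function $f(x) = \binom{x+1}{2} = \tfrac{x(x+1)}{2}$. The key elementary fact I would record first is that the forward differences of $f$ are strictly increasing: for every integer $x$,
\[
f(x+1) - f(x) = x+1,
\]
so $f$ is strictly convex on the integers. This monotonicity of increments is exactly what drives the exchange step below, and it is the reason the maximum is pushed to the boundary of the range $\{0,\dots,m\}$.

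Next I would prove the structural (``only if'') direction by a local exchange argument. Suppose $(x_1,\dots,x_I)$ is a maximizer, and suppose toward a contradiction that there are two distinct indices $j_1,j_2$ with $0 < x_{j_1} \le x_{j_2} < m$. I would consider the modified tuple obtained by replacing $x_{j_1}$ with $x_{j_1}-1$ and $x_{j_2}$ with $x_{j_2}+1$; this keeps every coordinate within $\{0,\dots,m\}$ and preserves the sum $q'm + r'$. The resulting change in the objective is
\[
\bigl(f(x_{j_2}+1) - f(x_{j_2})\bigr) - \bigl(f(x_{j_1}) - f(x_{j_1}-1)\bigr) = (x_{j_2}+1) - x_{j_1} \ge 1 > 0,
\]
contradicting maximality. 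Hence at any maximizer at most one coordinate lies strictly between $0$ and $m$, and all others lie in $\{0,m\}$.

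Then I would pin down the exact multiset of values realized by any such boundary-plus-one configuration. Writing $a$ for the number of coordinates equal to $m$ and $s$ for the single possible interior value (setting $s=0$ when there is none, so $0 \le s < m$ in every case), the constraint reads $am + s = q'm + r'$ with $0 \le r' < m$. Since $0 \le s < m$ and $0 \le r' < m$, reducing modulo $m$ forces $s = r'$ and $a = q'$. Thus every configuration with at most one interior coordinate has the same multiset of values --- namely $q'$ copies of $m$, one copy of $r'$ when $r' > 0$, and zeros otherwise --- and therefore the same objective value. The hypotheses $r' < m$ and $q' < I$ guarantee that at least one such configuration exists, since there is room among the $I$ coordinates for $q'$ entries equal to $m$ together with at most one more nonzero entry. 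Combining the two directions yields the claim: every tuple of the stated form attains this common value and is therefore a maximizer, while any tuple with two interior coordinates is strictly improved by the exchange and is therefore not, which is precisely the asserted ``if and only if''.

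I do not expect a genuine obstacle here, as this is a routine convexity argument; the only care needed is bookkeeping. In particular, the exchange must be made \emph{strict}, which is where the ordering $x_{j_2} \ge x_{j_1}$ ensures a gain of at least $1$, and the identification of the multiset must handle the $r'=0$ and $r'>0$ cases uniformly, which the modular reduction does cleanly by ruling out a ``phantom'' interior coordinate.
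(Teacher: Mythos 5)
Your proof is correct and takes essentially the same approach as the paper's: the identical strict-convexity exchange step (decrement a smaller interior coordinate $x_{j_1}$, increment a larger one $x_{j_2}$, gaining $(x_{j_2}+1)-x_{j_1}\ge 1$) that forces all but at most one coordinate into $\{0,m\}$. Your additional bookkeeping --- pinning down the multiset as $q'$ copies of $m$ and one copy of $r'$ by reduction modulo $m$, and checking feasibility from $q'<I$ --- makes the ``if'' direction of the equivalence explicit where the paper leaves it implicit, but it is an elaboration of the same argument rather than a different route.
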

\begin{proof} 
Let $f: \mathbb{Z}\rightarrow \mathbb{R}$ be a function with $f(n)= \frac{n(n+1)}{2}$ for every $n\in \mathbb{Z}$. Since $f$ is a strictly convex function, we have $f(n_1)+f(n_2)<f(n_1-1)+f(n_2+1)$ for all integers $n_1\le n_2$. 
Now assume to the contrary that there exists an assignment of values to the variables $x_j$ other than the one described in the statement of the lemma that maximizes $\sum_{j=1}^{I}\binom{x_j+1}{2}$. Then, this assignment must set two distinct variables $x_{j_1}=n_1$ and $x_{j_2}=n_2$ with $0<n_1\leq n_2<m$. By setting $x_{j_1}=n_1-1$ and $x_{j_2}=n_2+1$ and keeping the other variable assignments intact, $\sum_{j=1}^{I}x_j$ remains unchanged but $\sum_{j=1}^{I}\binom{x_j+1}{2}$ is strictly increased, a contradiction.
\end{proof}

We will need the following alternative description of the extremal graphs in \Cref{thm:1}, which is easy to verify. 
\begin{observation}\label{obs:extremal}
Let $k$, $n$, $q$, and $r$ be as in \Cref{thm:1}. 
Let $G$ be an $n$-vertex connected graph with $q+2$ subgraphs $C_0,C_1,\dots,C_{q+1}$, each of which is a copy of~$K_k$ such that letting $U_i := V(C_i)$ for every $i\in \{0,\dots,q+1\}$, we have 
\begin{enumerate}
    \item $E(G) = \cup_{i=0}^{q+1} E(C_i)$, and
    \item\label{enm:intersection one} except at most one value of $j\in [q+1]$, we have $|U_j \cap (\cup_{i=0}^{j-1} U_i)| = 1$, and
    \item if $j\in [q+1]$ violates \Cref{enm:intersection one}, then $|U_j \cap (\cup_{i=0}^{j-1} U_i)| = k-r$ and there exists $i\in \{0,\dots,j-1\}$ such that $|U_i\cap U_j| = k-r$. 
\end{enumerate}
Then, $G\in \G(qK_k,L)$, where the graph $L$ is the union of 2 copies of $K_k$ sharing $k-r$ vertices. 
\end{observation}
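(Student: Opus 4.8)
The plan is to build the linear hypertree required by \Cref{def_tree} directly out of the cliques $C_0,\dots,C_{q+1}$, merging the single pair that forms the copy of $L$. First I would locate this pair. If some index $j^{\ast}\in[q+1]$ fails the single-vertex intersection property, then by hypothesis it is the only such index and there is an $i^{\ast}<j^{\ast}$ with $|U_{i^{\ast}}\cap U_{j^{\ast}}|=k-r$; moreover $U_{j^{\ast}}\cap\bigcup_{i<j^{\ast}}U_i=U_{i^{\ast}}\cap U_{j^{\ast}}$, since the left side contains the right and both have size $k-r$. If instead every consecutive intersection equals $1$, then $k-r=1$ and I set $i^{\ast}=0$, $j^{\ast}=1$. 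In either case I declare the hyperedges to be $E_L:=U_{i^{\ast}}\cup U_{j^{\ast}}$ together with $U_t$ for all $t\notin\{i^{\ast},j^{\ast}\}$; these are $q+1$ subsets of $V(G)=\bigcup_i U_i$, the last equality holding because $G$ is connected on more than $k$ vertices and hence has no isolated vertex. It then remains to verify the two conditions of \Cref{def_tree} and that this hypergraph is a linear hypertree.

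The crux, and what I expect to be the main obstacle, is \emph{linearity}: every two of the chosen hyperedges meet in at most one vertex. For two sets $U_s,U_t$ with $s<t$ and $t\neq j^{\ast}$ this is immediate, since $|U_t\cap\bigcup_{i<t}U_i|=1$ forces $|U_s\cap U_t|\le1$. The delicate case is the merged hyperedge $E_L$, where I must rule out every clique $C_s$ with $s\notin\{i^{\ast},j^{\ast}\}$ that meets both $U_{i^{\ast}}\setminus U_{j^{\ast}}$ and $U_{j^{\ast}}\setminus U_{i^{\ast}}$; such a clique would simultaneously destroy linearity and contribute a spurious edge to $G[E_L]$. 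I would dispose of it by a case analysis on the position of $s$ in the build order. If $s>j^{\ast}$, then $C_s$ meets $\bigcup_{i<s}U_i$ in the two distinct vertices it shares with $U_{i^{\ast}}$ and with $U_{j^{\ast}}$, contradicting $|U_s\cap\bigcup_{i<s}U_i|=1$. If $s<j^{\ast}$, then the vertex $b$ that $C_s$ shares with $U_{j^{\ast}}\setminus U_{i^{\ast}}$ lies in $U_{j^{\ast}}\cap\bigcup_{i<j^{\ast}}U_i=U_{i^{\ast}}\cap U_{j^{\ast}}$, forcing $b\in U_{i^{\ast}}$ and contradicting $b\notin U_{i^{\ast}}$. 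A similar check (reducing the ``both shared vertices in $U_{j^{\ast}}$'' case to the ``both in $U_{i^{\ast}}$'' case via the identity above) upgrades this to $|U_s\cap E_L|\le1$ for every such $s$, which is exactly linearity of $E_L$ against the remaining hyperedges.

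Granting linearity, the rest is routine. Condition~1 of \Cref{def_tree} holds because $E(G)=\bigcup_i E(C_i)$ and each $E(C_i)$ sits inside one chosen hyperedge. For condition~2, linearity means no edge of $G$ joins the private vertices of two distinct cliques, so $G[U_t]=C_t\cong K_k$, while $G[E_L]$ is precisely $C_{i^{\ast}}\cup C_{j^{\ast}}$, the union of two copies of $K_k$ sharing $k-r$ vertices, i.e.\ $L$. Finally, to see the hypergraph is a hypertree I would use a vertex count rather than a direct cycle analysis: a connected linear hypergraph satisfies $\sum_i(|E_i|-1)\ge|V|-1$, with equality exactly for hypertrees, as one sees by choosing a spanning path inside each hyperedge and noting that linearity prevents two such paths from sharing an edge, so their union is a connected graph on $V(G)$ with exactly $\sum_i(|E_i|-1)$ edges. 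Here $|E_L|=2k-(k-r)=k+r$, hence $\sum_i(|E_i|-1)=q(k-1)+(k+r-1)=n-1$ by $n-k=q(k-1)+r$; together with connectivity and linearity this makes the hypergraph a hypertree. In the degenerate case where no exceptional index exists, I would first run the same build-order (largest-index) argument on the unmerged family $\{U_0,\dots,U_{q+1}\}$ to see it is acyclic, which forces $r=k-1$ and hence $|U_0\cap U_1|=k-r=1$, legitimising the choice $E_L=U_0\cup U_1$. In all cases the hyperedges form a linear hypertree realising $(qK_k,L)$, so $G\in\G(qK_k,L)$.
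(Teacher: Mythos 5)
Your proof is correct. Note that the paper does not actually prove \Cref{obs:extremal}: it is introduced only with the remark that it ``is easy to verify,'' so your argument is a completion of an omitted verification rather than an alternative to an existing proof. Your route --- merge $U_{i^{\ast}}$ and $U_{j^{\ast}}$ into the single hyperedge $E_L$, establish linearity by the case analysis on where the two putative common vertices of $U_s$ and $E_L$ sit and on whether $s<j^{\ast}$ or $s>j^{\ast}$, then certify the hypertree property by the count $\sum_i(|E_i|-1)=n-1$ --- is sound, and the pivotal identity $U_{j^{\ast}}\cap\bigcup_{i<j^{\ast}}U_i=U_{i^{\ast}}\cap U_{j^{\ast}}$ does double duty exactly as you use it: it kills the mixed case and reduces the ``both in $U_{j^{\ast}}$'' case to the ``both in $U_{i^{\ast}}$'' case; moreover, linearity of $E_L$ against the remaining $U_s$ is precisely what rules out stray edges between $U_{i^{\ast}}\setminus U_{j^{\ast}}$ and $U_{j^{\ast}}\setminus U_{i^{\ast}}$, giving $G[E_L]\cong L$ and hence $G\in\G(qK_k,L)$. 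Three tightenings for a final write-up: (i) of your counting criterion you only need (and should justify) the direction that a cycle forces $\sum_i(|E_i|-1)>|V|-1$; this holds because a cycle $e_1,\dots,e_\ell$ with distinct vertices $v_1,\dots,v_\ell$ concatenates subpaths of your chosen spanning paths (pairwise edge-disjoint by linearity) into a closed trail, hence a graph cycle in the union graph, which then has at least $|V|$ edges; (ii) in the degenerate case, acyclicity is not what forces $r=k-1$ --- the exact recursion $|\bigcup_{i\le j}U_i|=|\bigcup_{i<j}U_i|+(k-1)$ already gives $n=k+(q+1)(k-1)$ and hence $r=k-1$, using only that every vertex of the connected graph $G$ lies in some $U_i$, so the detour is unnecessary; and (iii) $G[U_t]\cong K_k$ needs no linearity at all, since $C_t$ is already a complete graph on all of $U_t$.
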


Our proof of \Cref{thm:1} is based on the following procedure (i.e., Procedure~\ref{procedureshrink}) that takes as input a connected graph~$G$ with a $(k,1)$-cover, where each iteration deletes vertices and edges that are incident to a new copy of $K_k$. The lower bound in \Cref{thm:1} is established by estimating the number of edges removed in each iteration of the procedure. This procedure will also be analyzed to characterize all extremal graphs in \Cref{thm:1}. 

\begin{proof}[\bf{Proof of \Cref{thm:1}}] 
    Suppose $G=(V,E)$ is a graph as in \Cref{thm:1}. Apply the following procedure to~$G$.
\begin{algorithm}[H]
    \caption{ } 
    \label{procedureshrink}
    \begin{algorithmic}[1]
           \State \textbf{Initialization:} Let $C_0$ be a copy of $K_k$ in $G$.
           \State Let $V_0 \coloneqq V \setminus V(C_0)$ and  $E_0 \coloneqq E \setminus E(C_0)$. Set $j\coloneqq 1$.
           \While{$V_{j-1}\neq \emptyset$}
           \begin{algsubstates}
             \State \begin{varwidth}[t]{\linewidth}
      Find an edge $e_j=u_jv_j \in E_{j-1}$ with $u_j \in V_{j-1}$ and $v_j \in V \setminus V_{j-1}$.\par
         Let $C_j$ be a copy of $K_k$ in $G$ with $e_j \in E(C_j)$.\par
        Set $V_j \coloneqq V_{j-1} \setminus V(C_j)$ and $E_j \coloneqq E_{j-1} \setminus E(C_j)$.\par
      \end{varwidth}
               \State $j\coloneqq j+1$
           \end{algsubstates}
           \EndWhile
    \end{algorithmic}
\end{algorithm}
Before we proceed with the proof, we make some remarks on Procedure~\ref{procedureshrink}. In line~3(a) of Procedure~\ref{procedureshrink}, such an edge $e_j$ always exists because $G$ is connected. Moreover, since $G$ has a $(k,1)$-cover, such a clique $C_j$ always exists. 
Let us assume that the loop of Procedure~\ref{procedureshrink} runs for $I\geq 0$ many iterations, i.e., $I$ is the smallest number for which $V_I = \emptyset$.

Note that $|V|-|V_0|=k$ and $|E|-|E_0|=\binom{k}{2}$. 
For every $j\in [I]$, let \[x_j:= |V(C_j)\cap (V\setminus V_{j-1})| = k - (|V_{j-1}|-|V_j|).\] 
Observe that $|E_{j-1}|-|E_j|\ge \binom{k}{2}-\binom{x_j}{2}$ for every $j\in [I]$. 
Moreover, since at each iteration $j\in [I]$, the edge $e_j=u_jv_j$ is chosen so that $u_j\in V_{j-1}$ and $v_j \in V \setminus V_{j-1}$ and the clique $C_j$ contains the edge $u_jv_j$, we have $x_j\in [k-1]$. 
By setting $x'_j=x_j-1$ for every $j\in [I]$, we get
\begin{equation}\label{eq:condition on x'_j}
    x'_j\in \{0,\dots, k-2\} \;\; \text{for every} \;\; j\in [I].
\end{equation}
Since $V_I=\emptyset$ and $|V|-|V_0|=k$ and $|V_{j-1}| -|V_j|= (k-x_j)=(k-x'_j-1)$ for every $j\in [I]$, we have 
\begin{align*}
    k+q(k-1)+r=n=|V|&=|V|-|V_{I}|= |V|-|V_0| + |V_0|-|V_1|+\dots+|V_{I-1}|-|V_{I}| \nonumber \\
    &=k + \sum_{j=1}^{I} (k-x'_j-1))=k+  I(k-1)-\sum_{j=1}^{I}x'_j.
\end{align*}
\begin{equation}    
    \text{Thus,} \;\; \sum_{j=1}^{I}x'_j=(I-q)(k-1)-r
    =(I-q-1)(k-1)+(k-r-1).\label{optimizationeq3}
\end{equation}
Similarly, we also have

\begin{align}\label{optimizationeq1}  
    |E|\ge |E|-|E_{I}|&= |E|-|E_0|+ |E_0|-|E_1|+\dots + |E_{I-1}|-|E_{I}|\nonumber \\
    &\ge \binom{k}{2} + \sum_{j=1}^{I} \left(\binom{k}{2}- \binom{x_j}{2}\right)\nonumber \\
    &= \binom{k}{2} + \sum_{j=1}^{I} \left(\binom{k}{2}- \binom{x'_j+1}{2}\right)\text{, subject to \eqref{eq:condition on x'_j} and \eqref{optimizationeq3}.}
\end{align}

We will minimize the expression in \eqref{optimizationeq1}. Since $\sum_{j=1}^{I}x'_j$ is nonnegative and $r\geq 1$, by~\eqref{optimizationeq3}, we must have $I\geq q+1$. At this point, fix $I\geq q+1$. Now, the minimum value of the expression in \eqref{optimizationeq1} is achieved when $\sum_{j=1}^{I}\binom{x'_j+1}{2}$ is maximized. Since $1\leq r \leq k-1$, we have $0\leq k-r-1 \leq k-2$. Using \eqref{optimizationeq3} and in \Cref{corollaryconvex}, setting $q'=I-q-1$ and $r'=k-r-1$ and $m=k-1$ (this relaxes the upperbound for each $x'_j$ in \eqref{eq:condition on x'_j} to $k-1$), we deduce that the maximum value of $\sum_{j=1}^{I}\binom{x'_j+1}{2}$ is achieved if and only if $x'_j\in \{0,k-1\}$ except at most one value of $j$. In particular, the maximum is attained when $x'_j=k-1$ for $j\in [I-q-1]$ and $x'_{I-q}=k-r-1$ and $x'_j=0$ for $j\in \{I-q+1,\dots,I\}$. By inserting these values into \eqref{optimizationeq1} we get
\begin{equation}\label{eq:final line}
|E|\geq \binom{k}{2} + (I-q-1) \left(\binom{k}{2}- \binom{k}{2}\right) + \left(\binom{k}{2}- \binom{k-r}{2}\right) + q \left(\binom{k}{2}- \binom{1}{2}\right)= (q+2)\binom{k}{2} -\binom{k-r}{2}.
\end{equation}
Since this holds for every $I\ge q+1$, we have established the proof of the inequality in \Cref{thm:1}.

We now prove the moreover part of \Cref{thm:1}. 
As already discussed in the introduction, it is easy to see that every graph in the family $\G(qK_k,L)$ achieves equality in \Cref{thm:1}. 
Thus, it is enough to show that every graph achieving equality also belongs to $\G(qK_k,L)$. Subsequently, by \Cref{obs:extremal}, it is enough to prove the following that says that every extremal graph satisfies the hypotheses of \Cref{obs:extremal}. 

\begin{lemma}\label{lem:alternative extremal structure satisfied}
Suppose $G$ is a graph as in \Cref{thm:1} with exactly $(q+2)\binom{k}{2} - \binom{k-r}{2}$ edges. Then, $G$ satisfies the properties in \Cref{obs:extremal}.
\end{lemma}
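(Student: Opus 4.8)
The plan is to prove \Cref{lem:alternative extremal structure satisfied} by showing that equality in \eqref{eq:final line} forces the execution of Procedure~\ref{procedureshrink} to have an extremely rigid structure, and then to read off the properties of \Cref{obs:extremal} from that structure. First I would observe that equality in the chain of inequalities \eqref{optimizationeq1}--\eqref{eq:final line} can only hold if \emph{every} inequality used along the way is tight. There are three sources of slack: (i) the bound $|E|\ge |E|-|E_I|$, (ii) the per-iteration estimate $|E_{j-1}|-|E_j|\ge \binom{k}{2}-\binom{x_j}{2}$, and (iii) the convexity optimization of $\sum_j\binom{x'_j+1}{2}$ via \Cref{corollaryconvex}. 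I would extract the exact combinatorial meaning of each of these being tight.

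Tightness of (i) means $E_I=\emptyset$, i.e.\ the cliques $C_0,\dots,C_I$ removed by the procedure cover \emph{all} edges of $G$; this will eventually give property~1 of \Cref{obs:extremal} once we argue $I=q+1$. Tightness of (ii) is the key structural fact: the inequality $|E_{j-1}|-|E_j|\ge \binom{k}{2}-\binom{x_j}{2}$ counts the edges of $C_j$ not already removed, and it is an equality precisely when the only edges of $C_j$ lying in $V\setminus V_{j-1}$ are the $\binom{x_j}{2}$ edges among the $x_j$ vertices of $C_j\cap(V\setminus V_{j-1})$ — in other words, no edge of $C_j$ with exactly one endpoint in $V_{j-1}$ has been counted before, and moreover the set $V(C_j)\cap(V\setminus V_{j-1})$ induces a clique whose every edge had already been removed by an earlier $C_i$. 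This says the ``overlap'' of $C_j$ with the previously processed part is exactly a face (sub-clique) of a single earlier clique, which is what properties~\ref{enm:intersection one} and~3 of \Cref{obs:extremal} demand. Tightness of (iii) forces the profile of intersection sizes: by \Cref{corollaryconvex} with $q'=I-q-1$, $r'=k-r-1$, $m=k-1$, all but at most one of the $x'_j$ lie in $\{0,k-1\}$, and since $\sum_j x'_j$ is fixed we must actually have $I=q+1$ (otherwise some $x'_j=k-1$, meaning $x_j=k$, i.e.\ $C_j$ is entirely inside the already-processed vertices, contradicting that $e_j=u_jv_j$ has the endpoint $u_j\in V_{j-1}$ which is newly removed). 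Thus exactly one clique has intersection $x_j-1=k-r-1$ (overlap $k-r$) with the earlier part and every other clique has $x_j-1=0$ (overlap $1$).

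Concretely I would argue as follows. Since $u_j\in V_{j-1}$ is deleted when $C_j$ is removed, we always have $x_j\le k-1$, so $x'_j\le k-2$; combined with the convexity conclusion that the values $k-1$ cannot occur, we get $x'_j\in\{0,k-r-1\}$ with $x'_j=k-r-1$ for exactly one index and $I=q+1$. Translating back, $|U_j\cap(\cup_{i<j}U_i)|=x_j=x'_j+1$, which is $1$ for all but one $j\in[q+1]$ and equals $k-r$ for the exceptional index — giving properties~\ref{enm:intersection one} and the first half of property~3. The remaining content of property~3 (that the $(k-r)$-element overlap sits inside a \emph{single} earlier $U_i$, not spread across several) is exactly the equality condition in (ii): if the $x_j$ newly-old vertices of $C_j$ were split among two or more previously removed cliques, then some edge among them would not yet have been deleted when $C_j$ is processed, strictly increasing $|E_{j-1}|-|E_j|$ beyond $\binom{k}{2}-\binom{x_j}{2}$ and breaking equality. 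Finally, $E_I=\emptyset$ together with $E(G)=\cup_{i=0}^{q+1}E(C_i)$ (each $C_i$'s edges are removed in step $i$) yields property~1.

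The main obstacle I expect is the careful bookkeeping behind the equality condition of (ii), namely proving that tightness of $|E_{j-1}|-|E_j|=\binom{k}{2}-\binom{x_j}{2}$ simultaneously forces the $x_j$ ``old'' vertices of $C_j$ to form a clique \emph{all of whose edges were already removed} and to lie inside one previously-chosen clique $U_i$. Making this precise requires tracking, for each edge of $C_j$, in which earlier iteration (if any) it was deleted, and ruling out the possibility that the overlap vertices come from distinct cliques $C_{i_1},C_{i_2}$ whose mutual edges were never removed. This is where the linearity and tree structure of the hypergraph $\{U_0,\dots,U_{q+1}\}$ must be synthesized from purely numerical equality conditions, and it is the heart of the argument; everything else is a direct translation of the optimization equalities into the three bullet points of \Cref{obs:extremal}.
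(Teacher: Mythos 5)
Your outline follows the paper's own proof almost step for step: the paper likewise extracts the three equality conditions from the chain \eqref{optimizationeq1}--\eqref{eq:final line}, concludes $I=q+1$, forces the profile $x'_{j'}=k-r-1$ for one index and $x'_j=0$ otherwise, and reads off properties 1 and 2 of \Cref{obs:extremal} exactly as you do. Your derivation of $I=q+1$ (attaining the relaxed maximum would require $x'_j=k-1$, i.e.\ $x_j=k$, for $I-q-1>0$ indices, which is impossible since $u_j\in V_{j-1}\cap V(C_j)$ forces $x_j\le k-1$) is a valid and essentially equivalent variant of the paper's phrasing, which instead compares the maximum over $\{0,\dots,k-2\}$ with the relaxed maximum over $\{0,\dots,k-1\}$ and gets a strict inequality. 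One small degenerate point: when $r=k-1$ your ``exactly one index with $x'_{j'}=k-r-1$'' collapses to all $x'_j=0$, and property 3 is trivial; the paper dispenses with this case explicitly.

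The genuine gap is the step you yourself flag as the ``heart'': you assert that if the $(k-r)$-element overlap set $S=V(C_{j'})\cap(\cup_{i<j'}V(C_i))$ is not contained in a single earlier $V(C_i)$, then some edge of $\binom{S}{2}$ was never deleted before iteration $j'$, but you give no argument, and this is not a formal consequence of tightness of your condition (ii) alone: a priori each edge among $S$ could lie in some earlier clique even though no single earlier clique contains all of $S$, so the per-iteration edge count would remain tight. The paper closes exactly this hole with a short standalone claim whose proof uses the already-derived profile \eqref{eq:intersections of C_i} (all non-exceptional cliques meet the union of their predecessors in exactly one vertex), not merely the numerical equalities: take $j^*<j'$ minimal with $S\subseteq\cup_{i\le j^*}V(C_i)$, pick $u\in S\cap V(C_{j^*})$ with $u\notin\cup_{i<j^*}V(C_i)$ (minimality) and $v\in S\setminus V(C_{j^*})$ (the contradiction hypothesis); then $uv\notin\cup_{i\le j^*}E(C_i)$, and for each $i$ with $j^*<i<j'$ the clique $C_i$ meets $\cup_{i'<i}V(C_{i'})$ in only one vertex, so it cannot contain both $u$ and $v$; hence $uv\notin\cup_{i<j'}E(C_i)$, contradicting that equality in $|E_{j'-1}|-|E_{j'}|=\binom{k}{2}-\binom{k-r}{2}$ forces every pair of $S$ (all of which are edges of the clique $C_{j'}$) to be covered by earlier cliques. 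This minimal-index extraction of a pair $u,v\in S$ that no earlier clique can cover is the one missing idea; with it your outline becomes the paper's proof, and without it property 3 of \Cref{obs:extremal}, and hence the characterization of the extremal graphs, is unproven.
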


\begin{proof}
We will first show that $I=q+1$. For the sake of contradiction, fix $I>q+1$. Then, by \Cref{corollaryconvex}, subject to \eqref{optimizationeq3} and $x'_j\in \{0,\dots,k-1\}$ for every $j\in [I]$, the sum $\sum_{j=1}^I \binom{x'_j+1}{2}$ is maximized only if $x'_j = k-1$ for exactly $I-q-1>0$ values of $j$. Thus, using this and \Cref{corollaryconvex} with $q'=I-q-1$ and $r'=k-r-1$ and $m=k-1$, we can conclude that 
\[
\max_{\substack{x'_j\in \{0,\dots,k-2\} \; \text{for} \; j\in [I] \\ x'_1,\dots,x'_I \; \text{satisfies} \; \eqref{optimizationeq3}}} \sum_{j=1}^I \binom{x'_j+1}{2} \;\; < \;\; \max_{\substack{x'_j\in \{0,\dots,k-1\} \; \text{for} \; j\in [I] \\ x'_1,\dots,x'_I \; \text{satisfies} \; \eqref{optimizationeq3}}} \sum_{j=1}^I \binom{x'_j+1}{2} = (I-q-1)\binom{k}{2} + \binom{k-r}{2}.
\]
This together with \eqref{optimizationeq1} yields us $|E| > (q+2)\binom{k}{2} - \binom{k-r}{2}$, a contradiction. 

Thus, it must be the case that $I=q+1$. Since $G$ has exactly $(q+2)\binom{k}{2} - \binom{k-r}{2}$ edges, we have equality in \eqref{eq:final line}. Recall that when we bounded $|E|$ in \eqref{eq:final line}, then the equality is achieved if and only if $x'_j \in \{0,k-1\}$ for all but at most one value of $j\in [q+1]$. Thus, we assume that there is $j'\in [q+1]$ such that $x'_{j'} = k-r-1$ and $x'_j=0$ for every $j\in [q+1]\setminus \{j'\}$. 
Consequently, 
\begin{equation}\label{eq:values of x_j}
    x_{j'} = k-r \;\; \text{and} \;\; x_j=1 \; \text{for every} \; j\in [q+1]\setminus \{j'\}. 
\end{equation}
Since all the inequalities in \eqref{optimizationeq1} must be equality, we have $|E_I|=0$ and ${|E_{j-1}|-|E_j|= \binom{k}{2}-\binom{x_j}{2}}$ for every $j\in [q+1]$. We will next show that $G$ satisfies the hypotheses of \Cref{obs:extremal}. Indeed, by considering the copies $C_0,C_1,\dots,C_{q+1}$ of $K_k$, since ${|E_I|=0}$, \Cref{obs:extremal}(1) holds. By definition of $x_j$ and using \eqref{eq:values of x_j}, we have  
\begin{equation}\label{eq:intersections of C_i}
    |V(C_{j'}) \cap (\cup_{i=0}^{j'-1} V(C_i))| = k-r \;\; \text{and} \;\; |V(C_{j}) \cap (\cup_{i=0}^{j-1} V(C_i))| = 1 \; \text{for every} \; j\in [q+1]\setminus \{j'\}.
\end{equation}
Thus, \Cref{obs:extremal}(2) holds. 
By \eqref{eq:intersections of C_i}, to prove \Cref{obs:extremal}(3), it remains to show 
\begin{equation}\label{eq:exceptional intersection}
    |V(C_i)\cap V(C_{j'})| = k-r \;\; \text{for some} \;\; i\in \{0,\dots,j'-1\}.
\end{equation}
Indeed, this is trivial when $r=k-1$. Thus, we assume $r<k-1$. In this case, suppose for the sake of contradiction, \eqref{eq:exceptional intersection} does not hold. Thus, the set $S = V(C_j') \cap (\cup_{i=0}^{j'-1} V(C_i))$ satisfies $S\not \subseteq V(C_i)$ for every $i\in \{0,\dots,j'-1\}$.
By \eqref{eq:values of x_j} and the discussion after that imply that $|E(C_{j'})\setminus (\cup_{i=0}^{j'} E(C_i))| = |E_{j'-1}|-|E_{j'}|= \binom{k}{2}-\binom{k-r}{2}$. On the other hand, by~\eqref{eq:intersections of C_i}, we have $|E(C_{j'})\setminus \binom{S}{2}|= \binom{k}{2}-\binom{k-r}{2}$, where $\binom{S}{2}$ denotes the set of all pairs of vertices in~$S$. The last two lines together imply that we must have $uv\in \cup_{i=0}^{j'-1} E(C_i)$ for every distinct $u,v\in S$. This is contradicted in the following claim, thus establishing \Cref{obs:extremal}(3). The only thing remaining is to prove the following claim.

\noindent{\bf{Claim.}}
If \eqref{eq:exceptional intersection} does not hold, then there exist $u,v\in S$ such that $uv\notin \cup_{i=0}^{j'-1} E(C_i)$. 

\noindent Let $j^*<j'$ be the minimum index such that $S\subseteq \cup_{i=0}^{j^*} V(C_i)$. Therefore, there exists $u\in S$ such that $u\in V(C_{j^*})$ and $u\notin \cup_{i=0}^{j^*-1} V(C_i)$. Fix such a vertex $u$. By assumption, $S\not \subseteq V(C_{j^*})$. Thus, there exists $v\in S$ such that $v\notin V(C_{j^*})$. Fix such a vertex $v$. Observe that ${uv \notin \cup_{i=0}^{j^*} E(C_i)}$. Moreover, by \eqref{eq:intersections of C_i}, since $u,v\in \cup_{i=0}^{j^*} V(C_i)$, we have $\{u,v\}\not\subseteq V(C_i)$ for every $i\in \{j^*+1,\dots,j'-1\}$. Thus, the edge $uv\notin \cup_{i=0}^{j'-1} E(C_i)$ establishing our claim. This finishes the proof of \Cref{lem:alternative extremal structure satisfied}. 
\end{proof}
\noindent This concludes the proof of \Cref{thm:1}.
\end{proof}

\section{Proof of \Cref{thm:2}} 
\label{section34bound}
In this section, we show that a connected $n$-vertex graph $G$ with a $(3,2)$-cover and with the minimum possible number of edges also has a $(4,1)$-cover. We start by introducing a few notations that will be handy.

\begin{notation}\label{tri}
For an edge $uv$ in a given graph $G$, we denote by $T_G(uv)$ the set of all vertices ${w\in V(G)}$ such that $w$, $u$, and $v$ form a triangle; i.e.,
\[{T_G(uv)\coloneqq\{w \in V(G) \; | \; uw, vw \in E(G)\}}.\] 
We briefly recall the definition of edge contraction.
The \defin{contraction operation} is performed on a specific edge $e=uv$ of a given graph $G$. In this process, the edge 
$e$ is removed, and its endpoints, $u$ and $v$, are identified. 
The graph $G.e$ is defined as the one obtained from $G$ by contracting the edge $e$ and identifying all multi-edges (i.e., changing every multi-edge to a simple edge), see \cref{thm2fig1} for a demonstration.
We note that the contraction of $e=uv$ in $G$ is associated with a map $f\colon V(G) \rightarrow V(G.e)$ defined as
$$        \label{correspond}
f(x)= \begin{cases}x & : x \notin\{u, v\} \\ u_v & : x \in\{u, v\}.\end{cases}
$$  
For $S\subseteq V(G)$, we will often use $f(S)$ to denote the set $\{f(x) : x\in S\}$.
\end{notation}

We will use induction to prove \Cref{thm:2}. The following is our key lemma that will help us execute the induction step.
\begin{lemma}\label{lemtwotriangle}
Let $G=(V, E)$ be a connected graph with a \mbox{$(3,2)$-cover} and $|V|>4$. Suppose $G$ has an edge~$e$ that is not in a copy of $K_4$. Then, the graph $G.e=(V',E')$ is also a connected graph with a \mbox{$(3,2)$-cover} and satisfies $|V'|=|V|-1$ and $|E'| \leq |E| -3$. 
\end{lemma}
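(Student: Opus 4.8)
The goal is to show that contracting an edge $e=uv$ that lies in no $K_4$ preserves the $(3,2)$-cover while removing at least three edges and exactly one vertex. The vertex count is immediate: contraction identifies $u$ and $v$, so $|V'|=|V|-1$. The real content is in two claims: the lower bound $|E'|\le |E|-3$ on the number of edges destroyed, and the preservation of connectivity and the $(3,2)$-cover. I would organize the proof around a careful analysis of the common neighborhood $T_G(uv)$ of the contracted edge.

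\textbf{Counting edges.} First I would count the edges lost under contraction. The edge $e$ itself disappears, contributing one. Then, for each vertex $w\in T_G(uv)$, the two edges $uw$ and $vw$ become parallel and are merged into a single edge $u_v w$, so each such common neighbor costs one additional edge beyond what is retained. Hence $|E|-|E'| = 1 + |T_G(uv)|$. Now the hypothesis that $e$ lies in no $K_4$ is used precisely to control $|T_G(uv)|$: since the $(3,2)$-cover guarantees $e$ lies in at least two triangles, we have $|T_G(uv)|\ge 2$; and if $|T_G(uv)|\ge 2$ with two common neighbors $w_1,w_2$ adjacent, then $u,v,w_1,w_2$ would form a $K_4$ containing $e$, a contradiction. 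So the two (or more) common neighbors are pairwise non-adjacent, but the key point I only need is the cardinality bound $|T_G(uv)|\ge 2$, which already gives $|E|-|E'|\ge 1+2=3$, i.e. $|E'|\le |E|-3$.

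\textbf{Connectivity and the cover.} Connectivity of $G.e$ is straightforward: contracting an edge of a connected graph yields a connected graph, since any path in $G$ maps to a walk in $G.e$ under $f$. The main obstacle, and the step I would spend the most care on, is verifying that $G.e$ still has a $(3,2)$-cover, i.e. that every edge of $G.e$ lies in at least two triangles. I would split this into edges not incident to the merged vertex $u_v$ and edges incident to it. For an edge $xy$ with $x,y\notin\{u,v\}$, any triangle $xyz$ in $G$ maps to a triangle in $G.e$ unless $z\in\{u,v\}$; the danger is that two distinct triangles $xyu$ and $xyv$ in $G$ collapse to the same triangle $xy\,u_v$ in $G.e$, reducing the triangle count. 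I would show this loss cannot drop the count below two: if $xy$ had both $u$ and $v$ as common neighbors then $x,y\in T_G(uv)$ and combined with the $(3,2)$-cover on $xy$ one can recover at least two surviving triangles (and note $x,y$ being common neighbors of $uv$ forces, together with the no-$K_4$ condition on $e$, that $xy\notin E$ unless handled separately). For an edge $u_v x$ incident to the merged vertex, every triangle through $ux$ or through $vx$ in $G$ maps to a triangle through $u_v x$; the two triangles guaranteed on (say) $ux$ survive, again giving at least two, with the only subtlety being the identification of a triangle $uxz$ with $vxz$ when $z$ is adjacent to both $u$ and $v$, which is controlled by the same $T_G(uv)$ analysis. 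The crux throughout is bookkeeping the collapse of pairs of triangles into one and using the no-$K_4$ assumption to limit how much collapsing occurs, so that the guaranteed multiplicity of two is never violated.
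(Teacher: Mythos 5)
Your overall route is the same as the paper's: you count $|E|-|E'| = 1+|T_G(uv)| \ge 3$ using the $(3,2)$-cover to get $|T_G(uv)|\ge 2$, note connectivity is immediate, and verify the cover by splitting on whether the edge meets $e$, with the no-$K_4$ hypothesis excluding the harmful collapses. However, two of your specific claims are false as written, and in both spots the correct resolution is that the offending configuration cannot occur at all, not that enough triangles survive it.

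In the incident case, your assertion that ``every triangle through $ux$ or through $vx$ in $G$ maps to a triangle through $u_v x$'' fails for the triangle $\{u,v,x\}$ itself, which exists exactly when $vx\in E$: under contraction it degenerates to the single edge $u_v x$, not to a triangle. Consequently, ``the two triangles guaranteed on (say) $ux$ survive'' is wrong when $T_G(ux)=\{v,z\}$ --- only one triangle survives from the $ux$ side, and no choice of ``(say)'' fixes this, since symmetrically $T_G(vx)$ may equal $\{u,z'\}$. The repair (the paper's Subcase~2.2) takes one triangle from each side, choosing $z\in T_G(ux)\setminus\{v\}$ and $z'\in T_G(vx)\setminus\{u\}$, both supplied by the cover, and then must show $z\neq z'$; this holds because $z=z'$ would make $\{u,v,x,z\}$ induce a $K_4$ containing $e$, i.e.\ $T_G(ux)\cap T_G(vx)=\emptyset$. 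That distinctness step is exactly the ``identification of $uxz$ with $vxz$'' you flag, but your bookkeeping around it (two survivors from a single side) does not close the case. Similarly, in the non-incident case with $u,v\in T_G(xy)$, the $(3,2)$-cover on $xy$ does not ``recover at least two surviving triangles'': $T_G(xy)$ could be exactly $\{u,v\}$, leaving a single triangle after the collapse. What actually saves you --- and what your parenthetical gestures at without drawing the conclusion --- is that this case is vacuous: $xy\in E$ together with $x,y\in T_G(uv)$ makes $\{u,v,x,y\}$ induce a $K_4$ containing $e$, contradicting the hypothesis (the paper's Subcase~1.3). With these two spots rewritten as impossibility arguments, your proof coincides with the paper's.
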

\begin{proof}
We first show the bounds on $|V'|$ and $|E'|$. Trivially, $|V'|=|V|-1$. 
Since $G$ has a $(3,2)$-cover, the graph obtained by contracting $e$ has at least two pairs of double-edges. By identifying these multi-edges we remove at least two edges from $G$ in addition to $e$ itself, see \cref{thm2fig1}. Therefore, $|E^{\prime}| \leq |E|-3$. Since $G$ is connected, it is easy to see that $G.e$ is also connected.
\begin{figure}[H]
    \centering
        \subfloat[]{{\includegraphics[scale=0.5]{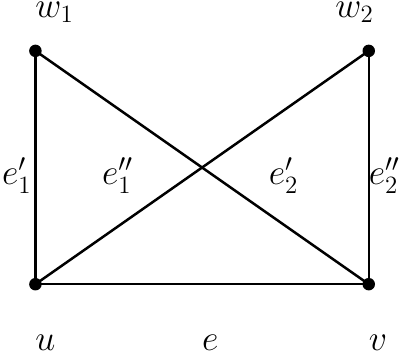}}} \qquad
        \subfloat[]{{\includegraphics[scale=0.54]{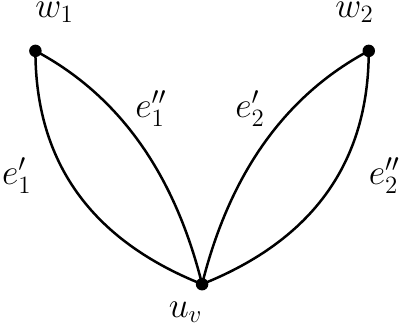}}} \qquad
         \subfloat[]{{\includegraphics[scale=0.54]{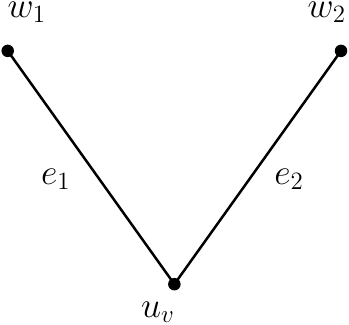}}}
    \caption{The contraction of edge $e=uv$, as described in \cref{lemtwotriangle}.}
    \label{thm2fig1}
\end{figure}
The only thing remaining is to prove that $G.e$ has a \mbox{$(3,2)$-cover}. Let $f$ denote the map corresponding to the contraction of $e$ in $G$.
Thus, it is sufficient to show the following.
\[|T_{G.e}(f(e'))| \geq 2 \;\; \text{for every} \;\; e'\in E\setminus \{e\}.\]
We split the proof into two possible cases depending on whether $e^{\prime}$ is incident to $e$ or not.
  
    \noindent \textbf{Case 1:} We assume that the edges $e^{\prime}=xy$ and $e=uv$ are not incident.\\ 
         \textbf{Subcase 1.1:} If $T_G(e^{\prime}) \cap \{u,v\}=\emptyset$, then $f$ maps every triangle containing $e^{\prime}$ onto itself and so $|T_{G.e}(f(e^{\prime}))| \geq 2$.\\
         \noindent\textbf{Subcase 1.2:} If $|T_G(e^{\prime}) \cap \{u,v\}| =1$, then without loss of generality, assume ${T_G(e^{\prime}) \cap \{u,v\}=\{u\}}$. Since $|T_G(e')|\ge 2$, there exists $z\in T_G(e')\setminus \{u,v\}$. Thus, we have $f(\{x,y,u\}) = \{x,y,u_v\}$ and $f(\{x,y,z\}) = \{x,y,z\}$. Therefore, $z,u_v\in T_{G.e}(f(e^{\prime}))$ and so $|T_{G.e}(f(e^{\prime}))| \geq 2$.\\ 
   \noindent  \textbf{Subcase 1.3:} If $|T_G(e^{\prime}) \cap \{u,v\}|=2$, then $\{x,y\} \subseteq T_G(e)$. This case is impossible since $\{u,v,x, y\}$ induces a copy of $K_4$ in $G$ containing $e$, contradicting our assumption that $e$ is not contained in a copy of $K_4$ in $G$. This case is depicted in \cref{thm2fig3}(a).\\
     \noindent \textbf{Case 2:} We now assume the edges $e^{\prime}=xy$ and $e=uv$ are incident. Without loss of generality, let $y=u$ which means that $e^{\prime}=xu$.\\
        \noindent  \textbf{Subcase 2.1:} If $v \notin T_G(e^{\prime})$, then for every $z \in T_G(e^{\prime})$, we have $f(\{x,u,z\})=\{x,u_v,z\}$, see \cref{thm2fig3}(b) and \cref{thm2fig3}(c). 
        Therefore, since $|T_G(e^{\prime})|\ge 2$, we have $|T_{G.e}(f(e^{\prime}))| \geq 2$.\\
        \begin{figure}[H]
    \centering
            \subfloat[\centering Subcase 1.3]{{\includegraphics[scale=0.75]{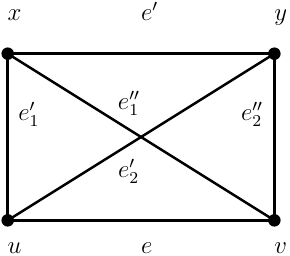}}} \\
        \subfloat[\centering Subcase 2.1 ($G$)]{{\includegraphics[scale=0.8]{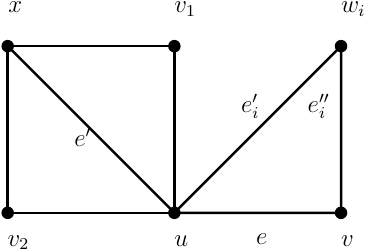}}}\qquad\qquad
        \subfloat[\centering Subcase 2.1 ($G.e$)]{{\includegraphics[scale=0.8]{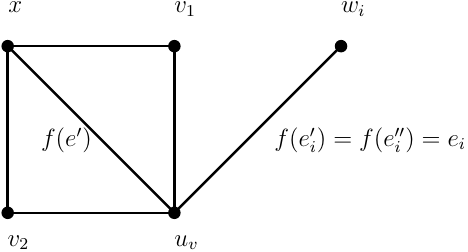}}}\hspace{0.15cm}
          
    \caption{An illustration of Subcase 1.3 and Subcase 2.1 in \cref{lemtwotriangle}.}
    \label{thm2fig3}
    \end{figure}
        \noindent  \textbf{Subcase 2.2:} If $v \in T_G(e^{\prime})$, then $xv\in E$. Therefore, $x \in T_G(e)$. 
        Since $G$ has a $(3,2)$-cover, $ux$ and $vx$ must each be contained in at least one triangle in $G$ other than $\{u,v, x\}$. 
        We claim that $T_G(ux) \cap T_G(vx) = \emptyset$.
        For the sake of contradiction, assume ${z \in T_G(ux) \cap T_G(vx)}$. 
        Then, $\{u,v, x,z\}$ induces a copy of $K_4$ in $G$ containing $e$, a contradiction, see \Cref{thm2fig2}(a). 
        Therefore, $T_G(ux) \cap T_G(vx) = \emptyset $, see \Cref{thm2fig2}(b). 
        In other words, $ux$ and $vx$ belong to two different triangles $\{u, x, z\}$ and $\{v,x, z'\}$ respectively, see \Cref{thm2fig2}(b). 
        Therefore,  $z,z'\in T_{G.e}(f(ux))$, we have $|T_{G.e}(f(e'))|\ge 2$, see \Cref{thm2fig2}(c). 
          \begin{figure}[h]
    \centering
        \subfloat[]{{\includegraphics[scale=0.65]{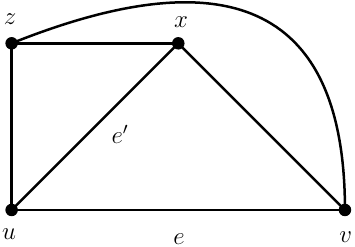}}}\qquad
        \subfloat[]{{\includegraphics[scale=0.65]{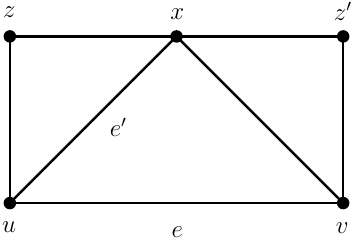}}}\qquad
\subfloat[]{{\includegraphics[scale=0.65]{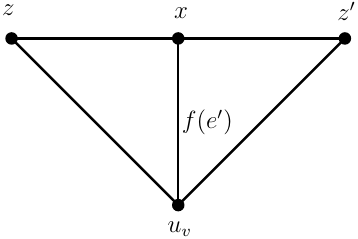}}}
    \caption{An illustration of Subcase 2.2 in \cref{lemtwotriangle}.}
    \label{thm2fig2}
\end{figure}

\noindent This finishes the proof of \Cref{lemtwotriangle}. 
\end{proof}

For the convenience of writing the proof of \Cref{thm:2}, we denote by $F(n)$ the minimum number of edges in a connected $n$-vertex graph with a $(4,1)$-cover. By \Cref{thm:1}, we have $F(n) = 6(q+2) - \binom{4-r}{2}$ where $n-4 = 3q + r$ with $q\ge 0$ and $1\le r\le 3$. 
\begin{lemma}
\label{lem:nminusitwo}
Let $n$, $q$, and $r$ be integers such that $n-4=3q+r$ where $q\ge 0$ and $1\leq r\leq 3$. 
Then, $F(n)-F(n-1)\leq 3$ and $F(n)-F(n-1)= 3$ if and only if $r=1$. 
\end{lemma}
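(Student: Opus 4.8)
The plan is to use the explicit formula for $F(n)$ furnished by \Cref{thm:1} with $k=4$, namely $F(n) = 6(q+2) - \binom{4-r}{2}$ where $n-4 = 3q+r$, and simply track how the pair $(q,r)$ evolves as $n$ decreases by one. First I would record the three possible values of $F(n)$ according to the residue $r$: evaluating $\binom{4-r}{2}$ gives $F(n) = 6q+9$ when $r=1$, $F(n) = 6q+11$ when $r=2$, and $F(n) = 6q+12$ when $r=3$.

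Next I would determine the parameters of $n-1$ in each case starting from $n-1-4 = 3q+r-1$. When $r\in\{2,3\}$ the quotient is unchanged and the residue drops by one, i.e. $(q,r)\mapsto(q,r-1)$, so the relevant values of $F(n-1)$ are read off directly from the list above. The only case requiring care is $r=1$, where $n-1-4 = 3q$ forces a ``carry'': for $q\ge 1$ I would rewrite $3q = 3(q-1)+3$, so that $n-1$ has parameters $(q-1,3)$ and hence $F(n-1) = 6(q-1)+12 = 6q+6$.

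Subtracting in each case then yields $F(n)-F(n-1) = 3$ when $r=1$, $F(n)-F(n-1) = 2$ when $r=2$, and $F(n)-F(n-1) = 1$ when $r=3$. This simultaneously establishes the inequality $F(n)-F(n-1)\le 3$ and the characterization that equality holds precisely when $r=1$.

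The only genuine subtlety, and the one point I would flag as the main obstacle, is the boundary instance $q=0$, $r=1$, i.e. $n=5$ and $n-1=4$: here $n-1=4$ is not covered by the generic formula, since there is no admissible pair $(q',r')$ with $3q'+r' = 0$ and $1\le r'\le 3$. Instead I would invoke the fact, noted in the text, that the unique $4$-vertex graph with a $(4,1)$-cover is $K_4$, giving $F(4) = 6$. Since $F(5) = 6\cdot 0 + 9 = 9$, the difference is again $3$, in agreement with the $r=1$ case, so the characterization remains valid at this edge case.
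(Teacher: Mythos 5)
Your proof is correct and takes essentially the same approach as the paper's: a case split on the residue $r$, tracking how the parameters $(q,r)$ change as $n$ drops by one (with the ``carry'' $(q,1)\mapsto(q-1,3)$), and subtracting the resulting explicit values of $F$. If anything, your handling of the boundary instance $n=5$ (where $n-1=4$ admits no pair $(q',r')$ with $q'\ge 0$, so one must use $F(4)=6$ directly from the uniqueness of $K_4$) is slightly more careful than the paper, whose Case 1 applies its formula there without comment.
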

\begin{proof}
       Let $q'\geq 0$ and $1\leq r'\leq 3$ be such that $(n-1)-4=3q'+r'$. We break the proof down into two cases.
    
        \noindent \textbf{Case 1:} $r=1$. In this case, $q^{\prime}=q-1$ and $r^{\prime}=3$. We have
        $$
        \begin{array}{cc}
             &F(n)-F(n-1)= 6(q+2) - \binom{3}{2} -6(q+1)+\binom{1}{2}= 6-3=3.
        \end{array}
        $$
        \noindent   \textbf{Case 2:} $2 \leq r \leq 3$. In this case, $q^{\prime}=q$ and $r^{\prime}=r-1$. We have
       
        \begin{align*}
             F(n)-F(n-1)&= \binom{4-r+1}{2}-\binom{4-r}{2}\in\{1,2\}.
        \end{align*}
\end{proof}

For convenience, we will prove \Cref{thm:2} in the following stronger form. 
\begin{theorem}\label{thm:2 stronger}
If $n\ge 4$ and $G$ is a connected $n$-vertex graph with a \mbox{$(3,2)$-cover} with the minimum possible number of edges, then $G$ has a \mbox{$(4,1)$-cover} and $|E(G)|=F(n)$. 
\end{theorem}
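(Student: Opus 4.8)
The plan is to prove \Cref{thm:2 stronger} by strong induction on $n$, using \Cref{lemtwotriangle} as the engine of the induction step. The base cases are $n=4,5,6$, which I would handle directly: for $n=4$ the only connected graph with a $(3,2)$-cover is $K_4$ itself (each edge of a triangle sits in only one triangle, so we genuinely need the fourth vertex), and $K_4$ plainly has a $(4,1)$-cover with $6=F(4)$ edges; the small cases $n=5,6$ can be checked by hand to confirm that the minimizer is forced to contain $K_4$-blocks and that its edge count equals $F(n)$.

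For the induction step, fix $n\ge 5$ and let $G$ be a connected $n$-vertex graph with a $(3,2)$-cover and the minimum possible number of edges; write $m(n)$ for this minimum. The strategy splits on whether every edge of $G$ already lies in a $K_4$. If it does, then $G$ has a $(4,1)$-cover, and by \Cref{thm:1} we get $|E(G)|\ge F(n)$; combined with the reverse inequality $m(n)\le F(n)$ (which holds because any extremal $(4,1)$-cover graph is in particular a $(3,2)$-cover graph, by \Cref{obs}), we conclude $|E(G)|=F(n)$ and we are done. So the substantive case is when some edge $e$ of $G$ is \emph{not} in any copy of $K_4$. Here I would apply \Cref{lemtwotriangle} to contract $e$, obtaining a connected $(n-1)$-vertex graph $G.e$ with a $(3,2)$-cover and $|E(G.e)|\le |E(G)|-3$. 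By minimality and the induction hypothesis, $|E(G.e)|\ge m(n-1)=F(n-1)$, so
\begin{equation*}
    |E(G)| \ge |E(G.e)| + 3 \ge F(n-1)+3.
\end{equation*}
The goal is to contradict optimality of $G$, i.e.\ to show this is strictly more than $F(n)$ whenever $e$ avoids every $K_4$.

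The crux is therefore the interaction with \Cref{lem:nminusitwo}, which says $F(n)-F(n-1)\le 3$ with equality exactly when $r=1$. When $r\neq 1$ the computation above already gives $|E(G)|\ge F(n-1)+3 > F(n-1)+(F(n)-F(n-1))=F(n)\ge m(n)$, contradicting that $G$ is a minimizer; so $G$ cannot contain a $K_4$-free edge and we fall back into the first case. The genuinely delicate situation—and what I expect to be the main obstacle—is $r=1$, where $F(n)-F(n-1)=3$ and the chain of inequalities is tight, so no contradiction is immediate. Here I would need to squeeze more out of \Cref{lemtwotriangle}: either show that when $e$ avoids all $K_4$'s the contraction actually removes strictly more than $3$ edges (i.e.\ the bound $|E(G.e)|\le|E(G)|-3$ is strict, because some $K_4$-free edge forces a third pair of multi-edges or a larger overlap of triangle neighborhoods), or argue structurally that a true minimizer in the $r=1$ regime cannot contain a $K_4$-free edge at all. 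Resolving this tight boundary case carefully is where the real work lies; once it is settled, the induction closes and both the $(4,1)$-cover conclusion and the exact count $|E(G)|=F(n)$ follow simultaneously.
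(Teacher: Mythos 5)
Your setup is sound and matches the paper's skeleton---induction on $n$, the dichotomy on whether some edge $e$ avoids every $K_4$, the contraction via \Cref{lemtwotriangle}, and the comparison $F(n)-F(n-1)\le 3$ from \Cref{lem:nminusitwo} (with \Cref{obs} giving $|E(G)|\le F(n)$)---but you stop exactly where the proof actually is. The tight case you flag ($F(n)-F(n-1)=3$, i.e.\ $r=1$) is not a ``boundary case to be settled carefully''; it is the entire content of the induction step, and your proposal contains no argument for it. Worse, the first escape route you suggest---strengthening \Cref{lemtwotriangle} so that contracting a $K_4$-free edge removes strictly more than $3$ edges---cannot work: in the octahedron $K_{2,2,2}$ no edge lies in a $K_4$, every edge lies in exactly two triangles, and the two triangle-apexes of any edge are non-adjacent, so contracting any edge removes exactly $3$ edges. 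Thus the bound in \Cref{lemtwotriangle} is tight for $K_4$-free edges in $(3,2)$-cover graphs, and only global minimality of $G$ can rescue the argument.

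Your second suggested route is what the paper does, and it requires machinery you have not invoked. Since every inequality in the chain $F(n-1)\le |E(G.e)|\le |E(G)|-3\le F(n)-3\le F(n-1)$ must be an equality, $G.e$ is itself a minimizer on $n-1$ vertices; the induction hypothesis then gives $G.e$ a $(4,1)$-cover with $F(n-1)$ edges, and---crucially---the \emph{uniqueness} part of \Cref{thm:1} (not just its lower bound) identifies $G.e$ as a member of $\G((q+1)K_4)$, a tree of $K_4$-blocks glued at single vertices, in which every edge's triangles lie inside a unique block. Equality also forces $|T_G(e)|=2$, say $T_G(e)=\{w_1,w_2\}$ with $w_1w_2\notin E$, and the contracted vertex $u_v$ then sits in two blocks $D_1\ni w_1$, $D_2\ni w_2$ meeting only at $u_v$. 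A local analysis lifting the edges of $D_1$ back to $G$ (using $|T_G(uw_1)|\ge 2$, $|T_G(vw_1)|\ge 2$, $|T_G(uw_1')|\ge 2$, and $|T_G(e)|=2$ to rule out $vw_1'$) forces $\{u,v,w_1,w_1''\}$ to induce a $K_4$ containing $e$, contradicting the choice of $e$. Without this block-structure argument---which rests on the extremal characterization in \Cref{thm:1} that your proposal never uses---the induction does not close, so the proposal as written has a genuine gap at its central step.
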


\begin{proof}
We use induction on $n$.\\
\noindent{\textbf{Base step:}} When $n=4$, the only connected $4$-vertex graph with a $(3,2)$-cover is $K_4$ and thus \Cref{thm:2 stronger} holds.\\
\noindent{\textbf{Induction step:}} Let $n\ge 5$. Now suppose \Cref{thm:2 stronger} holds when the number of vertices of~$G$ is $n-1$. 
Let $G=(V,E)$ be a connected $n$-vertex graph with a \mbox{$(3,2)$-cover} with the minimum possible number of edges. If every edge of $G$ is contained in a copy of $K_4$, then we are done. Therefore, we now assume that $G$ has an edge $e=uv$ that is not contained in a copy of $K_4$. By \Cref{lemtwotriangle}, the graph $G.e$ is a connected graph with a \mbox{$(3,2)$-cover} and satisfies $|V(G.e)|=n-1$.

We show the following claim.
\begin{claim}\label{clm:about G.e}
    The graph $G.e$ has the minimum number of edges among all connected $(n-1)$-vertex graphs with a \mbox{$(3,2)$-cover}. Moreover, $n-1 = 4+3q$ for some $q\ge 0$.
    \label{claimkomaki}
\end{claim}

\begin{proof}
    Since $G.e$ has a \mbox{$(3,2)$-cover}, using the induction hypothesis we have \mbox{$ F(n-1) \leq |E(G.e)|$}. 
    By~\Cref{lemtwotriangle}, we have $|E(G.e)| \leq |E| -3$. By \Cref{thm:1} and \Cref{obs}, we have $|E|\le F(n)$. 
    Combining these inequalities with \Cref{lem:nminusitwo}, we have: 
    \begin{equation}\label{eq:comparison after contraction}
        F(n-1) \leq |E(G.e)| \leq |E|-3 \le F(n)-3 \leq F(n-1). 
    \end{equation}
    Thus, all the inequalities above hold with equality. In particular, we have ${|E(G.e)|= F(n-1)}$ and so by the induction hypothesis on the graph $G.e$, we conclude that $G.e$ has the minimum number of edges among all connected $(n-1)$-vertex graphs with a \mbox{$(3,2)$-cover}. 
    We also have $F(n)-F(n-1)=3$. 
    Thus, by \Cref{lem:nminusitwo}, we have $n-1 = 4+3q$ for some $q\ge 0$.\end{proof}
              \begin{figure}[H]
    \centering
     \subfloat[$G.e$]{{\includegraphics[scale=0.57]{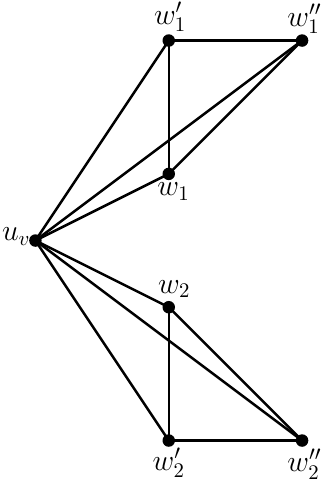}}}\qquad
           \subfloat[]{{\includegraphics[scale=0.57]{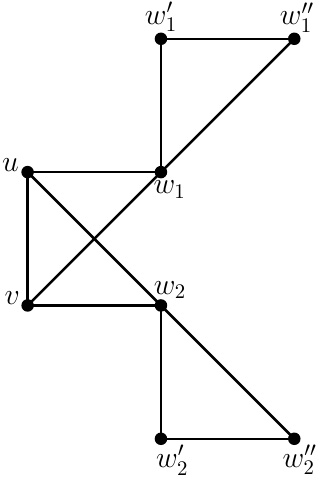}}\qquad} 
          \subfloat[]{{\includegraphics[scale=0.57]{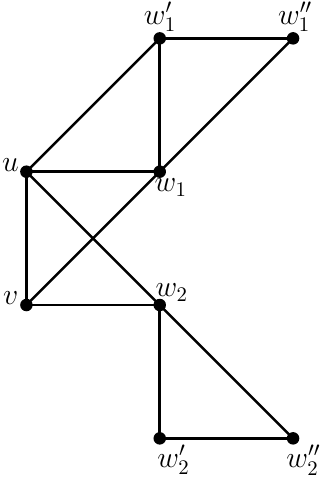}}\qquad}
              \subfloat[]{{\includegraphics[scale=0.57]{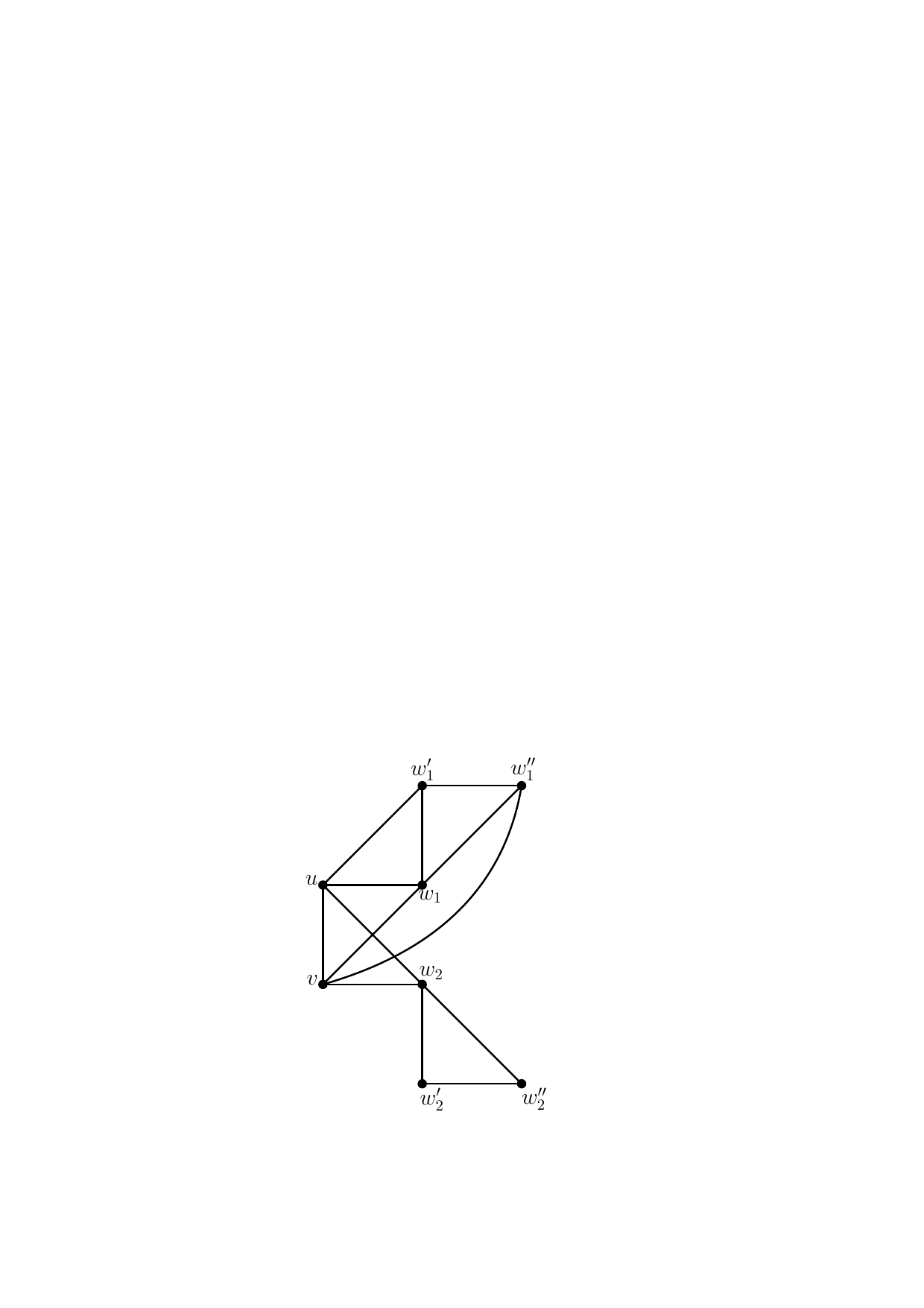}}\qquad}
             \subfloat[]{{\includegraphics[scale=0.57]{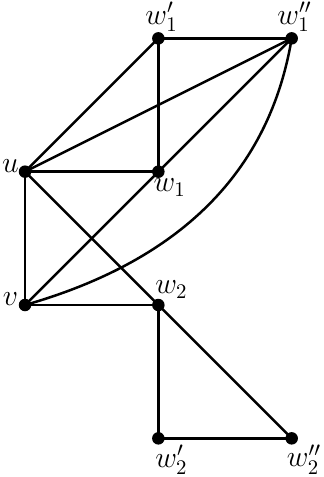}}\qquad}
    \caption{(a): A subgraph of $G.e$ after contracting edge $e$, as described in the proof of \cref{thm:2 stronger}. (b) to (e): A subgraph of $G$ before the contraction of $e=uv$ as described in the proof of \cref{thm:2 stronger}. The subfigures from (b) to (e) are organized incrementally in the sense that each figure updates the previous one with an additional edge. The proof of the existence of each such additional edge in $G$ is described in the proof of~\cref{thm:2 stronger}.}
    \label{thm5fig2}
\end{figure}
By \Cref{clm:about G.e} and the induction hypothesis, the graph $G.e$ has a $(4,1)$-cover and it has the minimum number of edges among all connected $(n-1)$-vertex graphs with a \mbox{$(4,1)$-cover}. Thus, by \Cref{thm:1} and the moreover part of \Cref{clm:about G.e}, we conclude that \mbox{$G.e \in \G((q+1)K_4)$}. 
By \eqref{eq:comparison after contraction}, we have $|E| - |E(G.e)|=3$. Thus, we must have $|T_G(e)|=2$. 
Indeed, if $|T_G(e)|\geq 3$, then we need to identify more than 3 edges from $G$ to get $G.e$, a contradiction.
Let $T_G(e)=\{w_1,w_2\}$. 
Since $e$ does not belong to any copies of $K_4$, we have $w_1 w_2\notin E$. 
Therefore, $G.e$ contains two copies $D_1$ and $D_2$ of $K_4$ with $u_v w_1 \in E(D_1)$ and $u_v w_2 \in E(D_2)$. 
Since $G.e \in \G((q+1)K_4)$, we have $V(D_1) \cap V(D_2) =\{u_v\}$.
Let $V(D_1)=\{u_v, w_1, w^{\prime}_1, w^{\prime \prime}_1\}$ and $V(D_2)=\{u_v, w_2, w^{\prime}_2, w^{\prime \prime}_2\}$. Clearly, $\{w_1, w^{\prime}_1, w^{\prime \prime}_1\}$ and $\{w_2, w^{\prime}_2, w^{\prime \prime}_2\}$ are vertex-disjoint triangles in both $G$ and $G.e$. See \Cref{thm5fig2}(a) and \Cref{thm5fig2}(b).
\noindent Recall from \cref{def_tree} that for any graph in $\G((q+1)K_4)$, the edges belong to exactly one $K_4$ and exactly two triangles. Thus, since $G.e \in \G((q+1)K_4)$, we have the following.
\begin{txteq}\label{uniquebag}
     For every $e'\in E(G.e)$, $\exists$ a unique copy $D$ of $K_4$ in $G.e$ such that ${T_{G.e}(e')\subseteq V(D)}$. 
\end{txteq}

Because $G$ has a $(3,2)$-cover, we know that ${|T_G(u w_1)| \ge 2}$ and $|T_G(vw_1)|\ge 2$. Since $u_v w_1 \in E(D_1)$ (\cref{thm5fig2}(a)), using \eqref{uniquebag}, we have $T_{G.e}(u_v w_1) \subseteq V(D_1)$. 
Thus, $T_G(u w_1)\setminus \{v\} \subseteq\{w^{\prime}_1, w^{\prime \prime}_1\}$ and $T_G(vw_1)\setminus \{u\}\subseteq \{w^{\prime}_1, w^{\prime \prime}_1\}$. Therefore, without loss of generality, we assume $u w^{\prime}_1 \in E$, see~\Cref{thm5fig2}(c). We also have $vw\in E$ for some $w\in \{w^{\prime}_1, w^{\prime \prime}_1\}$. Note that $vw^{\prime}_1$ cannot be an edge of  $G$, because otherwise, we would have $    \{w_1,w_2, w'_1\}\subseteq T_G(e)$, contradicting the fact that $|T_G(e)|=2$. Therefore, we must have $vw^{\prime \prime}_1 \in E$, see \cref{thm5fig2}(d).

We now conclude the proof by showing that $e$ is contained in a copy of $K_4$ and thus obtaining a contradiction. Since $G$ has a \mbox{$(3,2)$-cover}, we have $|T_G(u w^{\prime}_1)|\geq 2$. 
It follows from \eqref{uniquebag} that all triangles in $G.e$ containing $u_v w^{\prime}_1$ lie in $D_1$ and so ${T_G(uw^{\prime}_1) \setminus \{w_1\} = \{w^{\prime \prime}_1\}}$.
Thus, we deduce that $u w^{\prime \prime}_1 \in E$, see~\Cref{thm5fig2}(e). This leads to a contradiction because the vertices $u,v,w_1,w^{\prime \prime}_1$ induce a copy of $K_4$ in $G$ containing the edge $e$, see~\Cref{thm5fig2}(e). 
\end{proof}

\section{Future work}\label{sec:concluding remarks}

The following question remains wide open.
\begin{openproblem}\label{open!}
    For $k\ge 3$ and $\ell \ge 2$, what is the minimum number of edges in a connected $n$-vertex graph with a $(k,\ell)$-cover?
\end{openproblem}

\noindent In this paper, we answered the above question when $k\ge 3,\; \ell =1$ and when $k=3, \;\ell =2$.
Below, we mention a couple of interesting subquestions of \Cref{open!} that will be worth considering. The first one is a generalization of \Cref{thm:2}. 
\begin{openproblem}\label{op1}
    For $k\ge 4$, is it true that every connected $n$-vertex graph with a $(k,2)$-cover with the minimum possible number of edges also has a $(k+1,1)$-cover?
\end{openproblem}

\noindent The next question is another generalization of \Cref{thm:2}. It was already briefly mentioned in the introduction. 
\begin{openproblem}\label{open!!}
    For $\ell \ge 3$, what is the minimum number of edges in a connected $n$-vertex graph with a $(3,\ell)$-cover? 
\end{openproblem}
\noindent In the discussion after \Cref{op4}, we saw that the answer to this question does not coincide with the answer for $(\ell+2,1)$-cover in \Cref{thm:1} for every even $\ell\ge 6$. As mentioned in the introduction, it is known~\cite{BurkhardtFH20} that the asymptotic answer to \Cref{open!!} is $n\left(1+\frac{\ell} {2}\right)+O\left(\ell^2\right)$. However, more work needs to be done to find the exact answer. 

In a different direction, it was considered in \cite{chakraborti2020extremal} to study the minimum number of edges in an $n$-vertex graph where every vertex is in a copy of $H$ for general graphs $H$. 
Similarly, the following question can be studied generalizing the $(k,1)$-cover condition. 
\begin{openproblem}
    For a given graph $H$, what is the minimum number of edges in a connected $n$-vertex graph where every edge is in a copy of $H$?

    \label{problem4}
\end{openproblem}

\noindent{\bf{Acknowledgment}}
We thank Amirhossein Mashghdoust, Pat Morin, and Sophie Spirkl for insightful discussions during the preparation of this paper.

\bibliographystyle{plainurlnat}
\bibliography{main.bib}
\end{document}